   \renewcommand\@biblabel[1]{#1.}
    \numberwithin{equation}{section}
\DeclareSymbolFont{usualmathcal}{OMS}{cmsy}{m}{n}
\DeclareSymbolFontAlphabet{\mathcal}{usualmathcal}
\DeclareMathAlphabet\BCal{OMS}{cmsy}{b}{n}
\definecolor{cornellred}{rgb}{0.7, 0.11, 0.11}
\definecolor{britishracinggreen}{rgb}{0.0, 0.26, 0.15}
\definecolor{cobalt}{rgb}{0.0, 0.28, 0.67}
\theoremstyle{definition}
\newtheorem*{lemma*}{Lemma}
\newtheorem*{theorem*}{Theorem}
\newtheorem*{example*}{Example}
\newtheorem*{fact*}{Fact}
\newtheorem*{notation*}{Notation}
\newtheorem*{definition*}{Definition}
\newtheorem*{prop*}{Proposition}
\newtheorem*{remark*}{Remark}
\newtheorem*{corollary*}{Corollary}
\newtheorem*{conventions*}{Conventions}
\newtheorem{definition}{Definition}[section]
\newtheorem{example}[definition]{Example}
\newtheorem{remark}[definition]{Remark}
\newtheorem{conjecture}[definition]{Conjecture}
\newtheoremstyle{thm} 
        {3mm}
        {3mm}
        {\slshape}
        {0mm}
        {\bfseries}
        {.}
        {1mm}
        {}
\theoremstyle{thm}
\newtheorem{theorem}[definition]{Theorem}
\newtheorem{corollary}[definition]{Corollary}
\newtheorem{lemma}[definition]{Lemma}
\newtheorem{proposition}[definition]{Proposition}
\newtheorem{thm}{Theorem}
\newcommand{\simto}{\,\widetilde{\to}\,}
\newcommand{\boldit}[1]{\boldsymbol{#1}} 
\newcommand{\Z}{\mathbb{Z}}
\newcommand{\mQ}{\mathcal{Q}}
\newcommand{\mF}{\mathcal{F}}
\newcommand{\PP}{\mathbb{P}}
\newcommand{\mE}{\mathcal{E}}
\newcommand{\mU}{\mathcal{U}}
\newcommand{\of}{\mathcal{O}}
\DeclareMathOperator{\Spec}{Spec}
\DeclareMathOperator{\Sym}{Sym}
\DeclareMathOperator{\Gr}{Gr}
\DeclareMathOperator{\Fl}{Fl}
\DeclareMathOperator{\codim}{codim}
\DeclareMathOperator{\Bl}{Bl}
\DeclareMathOperator{\rank}{rank}
\DeclareMathOperator{\Hilb}{Hilb}
\DeclareMathOperator{\Var}{Var}
\DeclareMathOperator{\Exp}{Exp}
\newcommand*{\defeq}{\mathrel{\vcenter{\baselineskip0.5ex \lineskiplimit0pt
\hbox{\scriptsize.}\hbox{\scriptsize.}}}%
=}
\numberwithin{equation}{section}
\author{Enrico Fatighenti}
\address{\newline
Sapienza Universit\`a di Roma\hfill\newline
Dipartimento di Matematica ``Guido Castelnuovo''\hfill\newline
Piazzale Aldo Moro 5, 00185 Roma, Italy}
\email[E.~Fatighenti]{fatighenti@mat.uniroma1.it}
\author{Francesco Meazzini}
\address{\newline
Alma Mater studiorum Universit\`a di Bologna\hfill\newline
Dipartimento di Matematica\hfill\newline
Piazza di porta San Donato 5, 40126 Bologna, Italy}
\email[F.~Meazzini]{francesco.meazzini2@unibo.it}
\author{Giovanni Mongardi}
\address{\newline
Alma Mater studiorum Universit\`a di Bologna\hfill\newline
Dipartimento di Matematica\hfill\newline
Piazza di porta San Donato 5, 40126 Bologna, Italy}
\email[G.~Mongardi]{giovanni.mongardi2@unibo.it}
\author{Andrea T. Ricolfi}
\address{\newline
Alma Mater studiorum Universit\`a di Bologna\hfill\newline
Dipartimento di Matematica\hfill\newline
Piazza di porta San Donato 5, 40126 Bologna, Italy}
\email[A.~T.~Ricolfi]{andreatobia.ricolfi@unibo.it}
\title[Hilbert squares of degeneracy loci]{Hilbert squares of degeneracy loci}
\begin{document}

\begin{abstract}
Let $S$ be the first degeneracy locus of a morphism of vector bundles corresponding to a general matrix of linear forms in $\PP^s$.
We prove that, under certain positivity conditions, its Hilbert square $\Hilb^2(S)$ is isomorphic to the zero locus of a global section of an irreducible homogeneous vector bundle on a product of Grassmannians. 
Our construction involves a naturally associated Fano variety, and an explicit description of the isomorphism.
\end{abstract}

\maketitle


\thispagestyle{empty}

\section{Introduction}
The Hilbert scheme of 2 points $\Hilb^2(S)$ on a smooth variety $S$, called the \emph{Hilbert square of} $S$, is an interesting smooth variety, whose geometry is incredibly rich, and yet not fully understood. An intriguing problem consists in finding a projective embedding of $\Hilb^2(S)$, for example by either writing down equations or realising it as the zero locus of a section of some vector bundle. An archetypical example is when $S_g$ is a K3 surface of genus $g$, in which case $\Hilb^2(S_g)$ is a hyperk\"ahler fourfold, and a projective embedding is known in a bunch of cases, including $g=3,5,7,8,12$ --- the last one only up to deformations, see \cite{beauville1985variete, benedetti2018sous, debarre2010hyper,fatighenti2021fano, iliev2019hyperkaehler}. A few other cases are known, including the recent case of $\PP^2$, see \cite{hsm}.

In this paper, we focus on the special case where $S \subset \PP^s$ arises as the first degeneracy locus of a general morphism of vector bundles 
\[
\varphi \colon \mathcal O_{\mathbb P^s}^{\oplus n+m} \to \mathcal O_{\mathbb P^s}(1)^{\oplus n}.
\]
The case $s=3, n=3, m=0$ coincides with the quartic determinantal K3 surface studied by Iliev and Manivel in \cite{iliev2019hyperkaehler}. Letting $s,n,m$ vary, we find many examples of interesting varieties, including surfaces of general type.

Our idea is to study $\Hilb^2(S)$ via an auxilary hypersurface $Y \subset \PP^s \times \PP^{n+m-1} \times \PP^{n-1}$ naturally associated to $S$, and defined explicitly in \Cref{eqn:Ynsm}. The variety $Y$ is always a \emph{Fano variety}, whose study was one of the initial motivations for our project. Via a modular-type construction we then pass from $Y$ to $Z$, defined as
\[
Z = V(\omega) \hookrightarrow \Gr(2, n)\times\Gr(2, s+1) \times \Gr(n+m-2, n+m),
\]
where $\omega$ is a tri-tensor naturally attached to $\varphi$. As explained in \Cref{subsection.degeneracy}, $Z$ is the zero locus of a section of an irreducible, globally generated, homogeneous vector bundle naturally associated to $Y$.

Our main result proves that, in a certain infinite range, the variety $Z$ and the Hilbert square $\Hilb^2(S)$ of the variety we started with are isomorphic. Namely, we have the following.

\begin{thm}[\Cref{thm.Hilbiso}]\label{thm:mainthm_intro}
Let $n\geq3$, $m\geq0$, $s \in \set{m+2\,,\dots,\,2m+3}$. Assume $n > 2s-2m-3$. Then, there is an isomorphism of schemes $\vartheta \colon Z \simto \Hilb^2(S)$.
\end{thm}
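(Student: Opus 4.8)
The plan is to realise $\vartheta$ and its inverse explicitly through the linear algebra of the pencil of matrices cut out on a secant line, and then to verify that the two constructions are mutually inverse morphisms. Throughout I write $V_1=\C^{s+1}$, $V_2=\C^{n}$, $V_3=\C^{n+m}$ for the underlying vector spaces, so that $\varphi$ is a tensor in $V_1^\vee\otimes V_2\otimes V_3^\vee$ and, for $x\in V_1$, the evaluation $M(x)\in\Hom(V_3,V_2)$ is the matrix of linear forms at $[x]\in\PP^s$. Under the description of $Z$ from \Cref{subsection.degeneracy}, a point of $Z$ is a triple $(A,L,B)$ with $A\in\Gr(2,V_2^\vee)$ (a plane of cokernel directions), $L\in\Gr(2,V_1)$ (a secant line) and $B\in\Gr(n+m-2,V_3)$, and the vanishing $\omega=0$ says precisely that $M(x)^\vee(A)\subseteq\Ann(B)=:B^{\perp}$ for every $x\in L$; equivalently the trilinear form $\varphi$ restricts to zero on $A\otimes L\otimes B$.

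First I would build $\vartheta$. Fix $(A,L,B)\in Z$ and set $\ell=\PP(L)\subset\PP^s$. Since $M(x)^\vee(A)\subseteq B^\perp$ for all $x\in L$ and $\dim B^\perp=2=\dim A$, the assignment $\xi\mapsto M(x)^\vee\xi$ is a pencil $\nu\colon A\otimes\cO_\ell\to B^\perp\otimes\cO_\ell(1)$ of $2\times2$ matrices on $\ell\cong\PP^1$. Its determinant $\det\nu\in H^0(\cO_\ell(2))$ is a binary quadratic, and the point is that $\zeta:=V(\det\nu)\subset\ell$ is a length-$2$ subscheme contained in $S$: at a root $[x_0]$ the map $\nu_{x_0}$ has a kernel $\langle\xi_0\rangle\subset A$ with $M(x_0)^\vee\xi_0=0$ in $V_3^\vee$, so $\xi_0$ spans $\Ker M(x_0)^\vee$, whence $M(x_0)$ is not surjective and $[x_0]\in S$. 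Carrying this out over $Z$ — using the tautological subbundles $\mU_2$ (the $L$'s), $\mU_1$ (the $A$'s) and the rank-two bundle with fibre $B^\perp$ — produces a relative binary quadratic on the $\PP^1$-bundle $\PP(\mU_2)\to Z$ whose zero scheme is a family $\mathcal Z\subset S\times Z$, flat of length $2$ over $Z$, and the induced classifying morphism is $\vartheta\colon Z\to\Hilb^2(S)$.

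Next I would construct the inverse $\psi=\vartheta^{-1}$. Over $\Hilb^2(S)$ with universal subscheme $\Xi$, the image of $\Xi$ in $\PP^s$ spans a secant line, giving a morphism to $\Gr(2,V_1)$; the left-kernel lines $\Ker M(x_i)^\vee\subset V_2^\vee$ at the points of $\Xi$ span a two-dimensional subspace $A$, giving a morphism to $\Gr(2,V_2^\vee)$; finally $B$ is recovered from $B^\perp=\langle M(x_1)^\vee\xi_2,\,M(x_2)^\vee\xi_1\rangle$, which is the unique two-dimensional subspace of $V_3^\vee$ rendering $(A,L,B)$ compatible, since $M(x_i)^\vee\xi_i=0$ forces $M(x)^\vee(A)\subseteq B^\perp$ for all $x\in L$. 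These three morphisms assemble into $\psi\colon\Hilb^2(S)\to\Gr(2,V_2^\vee)\times\Gr(2,V_1)\times\Gr(n+m-2,V_3)$ which, by the last identity, factors through $Z$. On closed points the dictionary $(A,L,B)\leftrightarrow\zeta$ is a bijection — the datum $B$ precisely records the matching between the two points of $\zeta$ and their cokernel directions — so $\vartheta$ and $\psi$ are mutually inverse. As the bundle defining $\omega$ is globally generated, for general $\varphi$ the section $\omega$ is regular, so $Z$ is smooth of the expected dimension $2s-2m-2=\dim\Hilb^2(S)$, and a bijective morphism of smooth varieties of equal dimension with a morphism inverse is an isomorphism of schemes.

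The numerical hypotheses enter exactly where the construction could degenerate, and the hard part will be to show it never does. One must rule out points of $Z$ where $\det\nu$ vanishes identically (so that $\zeta$ would cease to be length $2$, e.g. a line lying in $S$), and dually points of $\Hilb^2(S)$ where the cokernel sheaf drops rank or the two vectors spanning $B^\perp$ become dependent; bounding the dimension of these loci is what forces $s\le 2m+3$ together with the strict inequality $n>2s-2m-3$, the latter also excluding the boundary hyperk\"ahler case $s=n=3$, $m=0$. The subtlest point, however, is the behaviour along the diagonal $\Delta\subset\Hilb^2(S)$: there the two points of $\zeta$ collide into a tangent vector, and I would have to check that the pencil $\nu$, the cokernel datum $A$ and the subspace $B$ all specialise compatibly, so that $\vartheta$ and $\psi$ remain inverse \emph{morphisms} rather than merely set-theoretic inverses. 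Establishing this scheme-theoretic matching — where the smoothness of $Z$ and the flatness of the universal length-$2$ family do the real work — is the main obstacle.
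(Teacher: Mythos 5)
Your construction of $\vartheta$ --- the zero scheme of the determinant of the $2\times 2$ pencil $\nu\colon A\otimes\cO_\ell\to B^\perp\otimes\cO_\ell(1)$ --- is in substance the paper's own construction (it is the eigenvalue analysis of the $2\times2$ matrix $\Phi$ in the proof of \Cref{thm.omegazerolocus}, repackaged as the incidence family of \Cref{lemma.lenght2}). But the two steps you defer to your final paragraph are exactly where the real content of the theorem lies, and your stated plan for one of them is incorrect. You claim that ruling out the degenerations where $\det\nu\equiv0$ --- i.e.\ where $[\rho_2]$ is a line contained in $S$ that the morphism $\psi\colon S\to\PP^{n-1}$ of \Cref{lemma.psi} either contracts to a point or maps to a line --- is a matter of ``bounding the dimension of these loci'', and that this is what forces $n>2s-2m-3$. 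That is true for contracted lines (\Cref{thm.badlines}), but for lines mapped to lines the genericity/dimension count (\Cref{thm.verybadlines}) only yields the exclusion when $n>2s-3m-1$, and the hypothesis $n>2s-2m-3$ implies $n>2s-3m-1$ only when $m\geq2$. In the boundary cases $m=0$, $n\in\{2s-2,\,2s-1\}$ and $m=1$, $n=2s-4$ --- which lie squarely inside the theorem's hypotheses and include the determinantal quartic K3 and the determinantal quintic surface --- no dimension count can work: the paper's \Cref{cor.omegazerolocus} must exclude linear subspaces $\PP^{n+m-2}\subset Z$ by hand, using the Picard lattice of the determinantal K3 and Noether--Lefschetz-type results for determinantal quintics. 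Without this input your family $\zeta=V(\det\nu)$ need not have constant length $2$, and $\vartheta$ is not even defined.

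The second gap you name yourself: extending your proposed inverse to a morphism along the diagonal, where your formulas for $A$ and $B^\perp$ (spans of data attached to two \emph{distinct} points) make no literal sense, is left as ``the main obstacle''. The paper's proof of \Cref{thm.Hilbiso} shows that this obstacle should be bypassed rather than overcome: no inverse is ever constructed. Instead, the uniqueness statement of \Cref{cor.omegazerolocus} shows that $\vartheta$ is injective on $B$-valued points for every $\C$-scheme $B$, hence a proper monomorphism, hence a closed immersion; since source and target are smooth of the same dimension $2(s-m-1)$, $\vartheta$ is then étale, hence open and closed onto the connected scheme $\Hilb^2(S)$, hence surjective, hence an isomorphism. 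As it stands, your proposal leaves open both the degenerate-locus exclusion (with an incorrect claim about how it is settled) and the scheme-theoretic inverse along the diagonal, so it does not constitute a proof.
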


Our proof goes via the explicit construction of the morphism $\vartheta$. In principle, it says nothing on the cases $n \leq 2s-2m-3$. However, we show that for low values of $m$ these two varieties are not even deformation equivalent --- indeed, their topological Euler characteristics are different. This observation leads us to conjecture that, in fact, our bound is optimal, see \Cref{conj.optimalbound}.

In Sections~\ref{section.setup}--\ref{sec:ex} we explain the geometric setup and the main motivating ideas behind this paper; we also explicitly describe various examples in which our result applies. Sections~\ref{section.linearalgebra}--\ref{section.badlines} are the technical core of this paper: first we describe in full detail the geometry of $Z$ (cf.~\Cref{thm.omegazerolocus}) independently upon the choice of $s,n,m$, then we explain how the cases in which our main result does \emph{not} work are related to the presence of some \emph{special lines} contained in $S$ (cf.~\Cref{thm.badlines} and \Cref{thm.verybadlines}). Our main result, \Cref{thm:mainthm_intro}, is proved in \Cref{section.mapZHilb} (cf.~\Cref{thm.Hilbiso}), whereas \Cref{section.examples} is devoted to the study of the geometry of some interesting varieties arising as the limit cases for which our method fails, but enjoying a beautiful and rich geometry. Among these examples we include \emph{generalised Bordiga scrolls} (cf.~\Cref{ex.bordigascroll}), \emph{higher dimensional White varieties} (cf.~\Cref{ex.whitevarieties}), and also certain varieties containing a finite number of special lines (cf.~\Cref{example.3foldm1} and \Cref{conj.optimalbound}).

\subsection*{Notation}
We work over the field of complex numbers $\mathbb{C}$.
For an arbitrary positive integer $d$ we let $V_d$ be a $d$-dimensional $\mathbb{C}$-vector space, which we also identify with $d$-dimensional affine space $\mathbb A^d$.

We denote by $\Gr(k,n)$ the Grassmannian of $k$-dimensional subspaces in $V_n$. We denote with $\mU$ the rank $k$ tautological vector bundle over it, with anti-ample determinant. We write $X=(G,\mF)$ to denote the zero locus $X=V(\sigma) \subset G$, for a general section $\sigma \in \mathrm{H}^0(G, \mF)$ of a vector bundle $\mF$ on a variety $G$. Sometimes we will need to work with a specific $\sigma$, and we will specify it accordingly.

\subsection*{Acknowledgements}
We are grateful to Kieran O'Grady and Claudio Onorati for useful discussions on the subject of this paper. The first three authors are members of INDAM-GNSAGA. The authors have been partially supported by PRIN2017 2017YRA3LK and PRIN2020 2020KKWT53.

\section{Setup, motivation and some toy cases}\label{section.setup}

\subsection{Degeneracy loci, Fano varieties and Hilbert schemes}\label{subsection.degeneracy}
We start by considering a very simple construction from linear algebra. We consider a general $n \times(n+m)$ matrix of homogeneous linear forms
\[
M=\begin{pmatrix}
f_1^1 & \ldots & f_{n+m}^1 \\
f_1^2 &\ldots & f_{n+m}^2 \\
\vdots& \ddots & \vdots \\
f_1^n & \ldots & f_{n+m}^n \\
\end{pmatrix}
\]
on an ambient projective space $\PP^s=\PP(V_{s+1})$. If we ask for $M$ to have non-maximal rank, we have to consider the locus where all the $n+1$ maximal minors vanish. This is of course equivalent to the existence of some linear relations between the rows of $M$.
We can therefore consider two strictly related loci: the first one is $S_{n,s,m} \subset \PP^s$,
 given by the vanishing of the maximal minors of $M$ --- i.e.~the \emph{first degeneracy locus} $D_{n-1}(\varphi)$ --- where we implicitly identify the matrix with the morphism $\varphi \colon \mathcal O_{\mathbb P^s}^{\oplus n+m} \to \mathcal O_{\mathbb P^s}(1)^{\oplus n}$ defining it. Sometimes we will shorten $S_{n,s,m}$ with $S$, when the subscripts are clear from the context.
 In other words,
 \[
 S_{n,s,m} = \Set{[v] \in \PP^s \ | \rank (M_v) \leq n-1},
 \]
where $M_v \in \mathsf{Mat}_{n,n+m}(\mathbb C)$ is the evaluation of $M$ at $v \in V_{s+1}$. 

\smallbreak
The second relevant locus is a subvariety $X_{n,s,m}\subset \PP^s \times \PP^{n+m-1}$, given by $n$ bihomogeneous linear polynomials of bi-degree $(1,1)$, i.e.~by a section of $\of(1, 1)^{\oplus n}$.
 
 What is the relation between $S$ and $X$? First of all, assume $S$ to be smooth with $\dim(S)>0$. Under our generality assumption, this will be equivalent to requiring $m+2 \leq s \leq 2m+3$, where the second inequality ensures that the further degeneracy loci will be empty.

Now, $X_{n,s,m}$ is constructed in a tautological way as follows: if $y_1, \ldots, y_{n+m}$ are chosen coordinates on $\PP^{n+m-1}$, and  $F_i= (f_1^i, \ldots, f_{n+m}^i)$ is the $i$-th row of our matrix $M$, we will have
\[
X_{n,s,m} = V(F_1 \cdot \underline{y}, \ldots, F_n \cdot \underline{y}) \subset \PP^s \times \PP^{n+m-1}.
\]
Using our notation,
\begin{equation}\label{eqn:X_nsm}
X_{n,s,m}= \left(\PP^s \times \PP^{n+m-1}, \of(1,1)^{\oplus n}\right).
\end{equation}
The fibres of the projection $\pi\colon X_{n,s,m} \hookrightarrow \PP^s \times \PP^{n+m-1} \to \PP^s$ are generically cut out by $n$ linear equations,
or $n-1$ exactly where there is a linear dependence relation in $M$ (and that is all that can happen, since by hypothesis there are no further degenerations): in other words, we have proved the following lemma.

\begin{lemma}
In the setup above, $\pi\colon  X_{n,s,m} \to \PP^s$ is generically a $\PP^{m-1}$-bundle jumping to a $\PP^{m}$-bundle exactly over $S_{n,s,m}$.
\end{lemma}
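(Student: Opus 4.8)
The plan is to describe each fibre of $\pi$ as a projectivised kernel, and then to organise these kernels into honest vector bundles over the two natural strata of $\PP^s$.

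First I would fix a point $[v]\in\PP^s$ and observe that $[y]\in\pi^{-1}([v])$ exactly when $M_v\cdot y=0$; since the defining forms $F_i\cdot\underline y$ are linear in $\underline y$, the scheme-theoretic fibre is the linear subspace
\[
\pi^{-1}([v]) = \PP(\ker M_v)\subset\PP^{n+m-1}.
\]
By rank--nullity, $\dim\ker M_v=(n+m)-\rank(M_v)$, so $\pi^{-1}([v])\cong\PP^{\,n+m-1-\rank(M_v)}$, and the task reduces to pinning down the possible values of $\rank(M_v)$.

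Next I would rule out rank drops of more than one. Trivially $\rank(M_v)\le n$, while the locus where $\rank(M_v)\le n-2$ is the second degeneracy locus $D_{n-2}(\varphi)$, whose expected codimension in $\PP^s$ is $(n+m-(n-2))(n-(n-2))=2m+4$. Under the standing hypothesis $s\le 2m+3$ this exceeds $s$, so for general $M$ the locus $D_{n-2}(\varphi)$ is empty. Hence $\rank(M_v)\in\set{n-1,n}$ for every $[v]$, and by definition $\rank(M_v)=n-1$ precisely on $S_{n,s,m}$. Substituting into the fibre formula yields $\pi^{-1}([v])\cong\PP^{m-1}$ off $S$ and $\pi^{-1}([v])\cong\PP^{m}$ along $S$, which already establishes the set-theoretic ``jumping'' assertion.

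Finally, to upgrade this to bundle statements I would invoke constancy of the rank on each stratum. On the open set $U=\PP^s\setminus S$ the map $\varphi|_U$ is a surjection of vector bundles, so $\ker(\varphi|_U)$ is locally free of rank $m$ and $X_{n,s,m}|_U=\PP(\ker\varphi|_U)$ is a Zariski-locally trivial $\PP^{m-1}$-bundle; on the smooth variety $S$ the restriction $\varphi|_S$ has constant rank $n-1$, so $\ker(\varphi|_S)$ is locally free of rank $m+1$ and $X_{n,s,m}|_S=\PP(\ker\varphi|_S)$ is a $\PP^{m}$-bundle over $S$. The one step genuinely requiring care is this last local-freeness claim: it is exactly the constancy of the rank on each stratum---equivalently, smoothness of $S$ together with emptiness of $D_{n-2}(\varphi)$, both guaranteed for general $M$ by the inequalities $m+2\le s\le 2m+3$---that prevents the kernel from degenerating and makes the two bundle structures exist.
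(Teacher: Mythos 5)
Your proof is correct and follows essentially the same route as the paper: the fibre over $[v]$ is the projectivised kernel of $M_v$, cut out by $n$ linear forms whose rank is generically $n$ and drops to $n-1$ exactly on $S_{n,s,m}$, with deeper drops excluded because $D_{n-2}(\varphi)$ has expected codimension $2m+4>s$ and is therefore empty for general $M$. Your final step upgrading the fibre-wise statement to an actual bundle structure on each stratum (local freeness of $\ker\varphi$ via surjectivity off $S$, resp.\ constant rank along the smooth, hence reduced, $S$) is a detail the paper leaves implicit, and you carry it out correctly.
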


We call $X=X_{n,s,m}$ a generalised $(m-1, m)$ blow-up of $S=S_{n,s,m}$.
This construction is sometimes referred to as \emph{Cayley trick}. This is in fact a generalisation of the blow-up formula, and it implies that the vanishing cohomologies of $X$ and $S$ are isomorphic, and also that $D^b(X)$, the bounded derived category of coherent sheaves over $X$, contains a copy of $D^b(S)$. References for this fact can be found in \cite[Theorem 2.4]{kkll} and \cite[Proposition 46]{bfm}.

\smallbreak
We could have built yet another natural variety starting from the matrix $M$ (or better, its transpose). If we take the transpose $M^t$ of the matrix $M$, and we apply it to a vector $\underline{z}=(z_1, \ldots, z_n)^t$ we can consider the locus $\Gamma_{n,s,m} \subset \PP^s \times \PP^{n-1}$, given by $M^t \cdot \underline{z} = 0$. In other words, if we write $F^t_i=(f_i^1, \ldots, f_i^n)$, we have then
\[
\Gamma_{n,s,m}=V(F^t_1 \cdot \underline{z}, \ldots, F^t_{n+m} \cdot \underline{z}) \subset \PP^s \times \PP^{n-1}
\]
and again, in our notation,
\[
\Gamma_{n,s,m}= (\PP^s \times \PP^{n-1}, \of(1,1)^{\oplus n+m}).
\]
Consider, this time, the restricted projection $\Gamma_{n,s,m} \hookrightarrow \PP^s \times \PP^{n-1} \to \PP^s$. This time the fibre is generically empty, and it becomes a point exactly where the rank drops, i.e.~on $S$. In other words, one has the following lemma.

\begin{lemma}\label{lem:proj}
The projection $\PP^s \times \PP^{n-1} \to \PP^s$ restricts to an isomorphism $\Gamma_{n,s,m} \simto S_{n,s,m}$.
\end{lemma}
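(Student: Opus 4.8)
The plan is to study the projection $p \colon \Gamma_{n,s,m}\to\PP^s$ fibre by fibre and then promote the resulting set-theoretic bijection to an isomorphism of schemes. Writing $q_1,q_2$ for the two projections of $\PP^s\times\PP^{n-1}$ and $p\defeq q_1|_\Gamma$, the fibre of $p$ over $[v]\in\PP^s$ is $p^{-1}([v]) = \Set{[\underline z]\in\PP^{n-1} \ | \ M_v^t\,\underline z = 0} = \PP(\Ker M_v^t)$, and $\dim\Ker M_v^t = n - \rank M_v$. Off $S_{n,s,m}$ one has $\rank M_v = n$, so the fibre is empty. On $S_{n,s,m}$ the rank drops, and here the standing bound $s\le 2m+3$ is essential: it guarantees that there are no further degenerations, so that $\rank M_v = n-1$ \emph{exactly} for every $[v]\in S_{n,s,m}$. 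Hence $\Ker M_v^t$ is a line, the fibre is a single reduced point, and $p$ is a bijection onto $S_{n,s,m}$.

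To see that $p$ is an isomorphism of schemes I would first check that it factors through the closed subscheme $S_{n,s,m}$, i.e.\ that the maximal minors of $M$ restrict to zero on $\Gamma$. Let $g_i = F_i^t\cdot\underline z$ (for $i=1,\dots,n+m$) be the bidegree $(1,1)$ forms cutting out $\Gamma$, and for an $n$-element subset $J\subseteq\{1,\dots,n+m\}$ let $M_J$ be the corresponding $n\times n$ submatrix, with minor $\Delta_J=\det M_J$ a generator of the ideal $I_S$. The adjugate identity $\operatorname{adj}(M_J^t)\,(M_J^t\underline z) = \det(M_J^t)\,\underline z$ reads, coordinatewise, $\Delta_J\,z_k = \sum_{i\in J}\operatorname{adj}(M_J^t)_{ki}\,g_i \in I_\Gamma$ for every $k$. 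As the $z_k$ have no common zero on $\PP^{n-1}$, this forces $\Delta_J|_\Gamma = 0$; thus $q_1^\ast I_S\subseteq I_\Gamma$ and $p$ factors as a morphism $\Gamma\to S_{n,s,m}$.

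For the inverse I would use that $M^t$ has constant rank $n-1$ along the (smooth) locus $S_{n,s,m}$, so that $\mathcal L\defeq\Ker\bigl(M^t|_S\colon \mathcal O_S^{\oplus n}\to\mathcal O_S(1)^{\oplus n+m}\bigr)$ is a line subbundle of $\mathcal O_S^{\oplus n}$. Its classifying morphism $\psi\colon S_{n,s,m}\to\PP^{n-1}$, paired with the inclusion $\iota\colon S_{n,s,m}\hookrightarrow\PP^s$, defines $\sigma=(\iota,\psi)\colon S_{n,s,m}\to\PP^s\times\PP^{n-1}$. Since $\sigma$ pulls the tautological subbundle back to $\mathcal L\subseteq\Ker(M^t|_S)$, the defining section of $\Gamma$ pulls back to zero, so $\sigma$ factors through $\Gamma$. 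Now $p\circ\sigma=\iota$ and $\iota$ is a monomorphism, whence $p\circ\sigma=\operatorname{id}_{S}$. For the opposite composite it is enough to compare the $\PP^{n-1}$-components of $\sigma\circ p$ and $\operatorname{id}_\Gamma$: the former is classified by $p^\ast\mathcal L$, the latter by the tautological $q_2^\ast\mathcal O(-1)|_\Gamma$, and both sit inside $\Ker(M^t|_\Gamma)$. By constant rank this kernel commutes with the base change $p$, so it is a line bundle equal to $p^\ast\mathcal L$; the tautological inclusion into it is nonzero at every closed point, hence an isomorphism of line bundles. The two morphisms to $\PP^{n-1}$ therefore agree, giving $\sigma\circ p=\operatorname{id}_\Gamma$.

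I expect the only real difficulty to be this scheme-theoretic upgrade. The fibrewise count is immediate, but the isomorphism rests on two inputs that form the crux: the constant-rank statement along $S_{n,s,m}$ --- precisely what the bound $s\le2m+3$ provides, and without which $\mathcal L$ would fail to be locally free --- and the adjugate computation yielding $q_1^\ast I_S\subseteq I_\Gamma$. Everything else is formal.
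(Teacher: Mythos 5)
Your proof is correct, and it does substantially more than the paper's own argument, which consists solely of the fibrewise count: the fibre of the projection over $[v]\in\PP^s$ is $\PP(\Ker M_v^t)$, empty where $\rank M_v=n$ and a single point where $\rank M_v=n-1$, i.e.\ exactly over $S_{n,s,m}$ (no deeper degeneration occurs since $D_{n-2}(\varphi)=\emptyset$ in the range $s\leq 2m+3$). The paper records this observation and states the lemma, leaving the scheme-theoretic upgrade implicit. You supply that upgrade explicitly, and both of your additional inputs are sound: the adjugate identity correctly yields $q_1^{\ast}I_S\subseteq I_\Gamma$, so the projection factors through the degeneracy locus with its minor-ideal scheme structure, and the constant-rank kernel $\mathcal L=\Ker(M^t|_S)$ with its classifying map gives a genuine inverse --- note that this classifying map is precisely the morphism $\psi$ that the paper constructs later in \Cref{lemma.psi} via $\mathrm{coker}(\varphi)|_S$, so your proof anticipates that construction. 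As for what each route buys: the paper's treatment is shorter because the upgrade can also be done abstractly --- the projection restricted to $\Gamma$ is proper, and since its scheme-theoretic fibres over $S$ are single reduced points it is unramified and universally injective, hence a proper monomorphism, hence a closed immersion onto the smooth (in particular reduced) variety $S$, hence an isomorphism; this is exactly the argument the paper deploys later for $\vartheta$ in the proof of \Cref{thm.Hilbiso}. Your route costs more writing but produces an explicit inverse and the ideal-theoretic inclusion, neither of which the one-line fibre count or the abstract argument makes visible.
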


This implies that the Picard group of $S_{n,s,m}$ is $\Z^2$ (at least generically), and the line bundles $\of(1,1)$ and $\of(1,0)$ (restricted from $\PP^s \times \PP^{n-1}$) are both very ample.
In what follows, we will study as well the morphism induced by $\of(0,1)$, showing that it will be very ample in a certain range (namely $n>2s-2m-3$) as well.

\smallbreak
Consider now two triples $(n_1, s_1, m_1)$ and $(n_2, s_2, m_2)$: if we set $n_2=s_1+1, s_2=n_1-1, m_2=n_1+m_1-s_1-1$, then $\Gamma_{n_1, s_1, m_1}$ and $\Gamma_{n_2, s_2, m_2}$ are both $(n_1+m_1)$-codimensional linear sections of $\PP^{s_1} \times \PP^{n_1-1}$, with the role of the two projective spaces exchanged, hence they belong to the same deformation family. When the triples satisfy such a relation, we call them \emph{associated}.

If we are in the correct range for the first triple, i.e.~$m_1+2 \leq s_1 \leq 2m_1+3$, $n_1\geq3$ and $n_1>2s_1-2m_1-3$, then the second triple will be in the correct range as well (in fact $n_2>2s_2-2m_2-3$ reduces exactly to $s_1 \leq 2m_1+3$).

In this range both projections to $\PP^{s_1}$ and $\PP^{n_1-1}$ are embeddings when restricted to $\Gamma$ (this follows from \Cref{thm.badlines}): in other words, \[
S_{n_2, s_2, m_2}\cong S_{s_1+1, n_1-1,n_1+m_1-s_1-1}
\]
yields another presentation for $S_{n_1, s_1, m_1}$, with a different embedding. We will see these phenomena in detail when dealing with two presentations of  determinantal quartic K3 surfaces (abstractly but not projectively isomorphic), and of a quintic determinantal surface embedded as codimension 2 degeneracy locus, see \Cref{subsec:toy_I}.

\smallbreak
Let us now get back to $X=X_{n,s,m}$, and perform once again a \emph{Cayley trick}. In fact, we can associate to $X_{n,s,m}$ another variety
\begin{equation}\label{eqn:Ynsm}
Y_{n,s,m} =(\PP^s \times \PP^{n+m-1} \times \PP^{n-1}, \of(1,1,1)),
\end{equation}
defined tautologically starting from the equations of $X$. This will be simply given by
\[
Y_{n,s,m}= V\left(\sum_{i=1}^n z_i (F_i \cdot \underline{y}) \right).
\]
Of course, the projection $\PP^{n-1}\times\PP^s \times \PP^{n+m-1} \to \PP^s \times \PP^{n+m-1}$ restricted to $Y$ is generically a $\PP^{n-2}$-bundle, with special fibres the whole $\PP^{n-1}$ over  $X$.

Notice that $Y=Y_{n,s,m}$ is a Fano variety, simply by adjunction: on the other hand this is not the case in general for $X$ or $S$: as a matter of fact, we will work only under certain (at least) non-negativity assumption for the canonical bundle of $S$. 

In a certain sense, the main character of the whole story is precisely the Fano variety $Y$: we can see it as the universal variety associated to a tri-tensor $\omega \in V_s^{\vee} \otimes V_{n+m}^{\vee} \otimes V_n^{\vee}$ simply given by $\omega= \sum_{1\leq i\leq n} z_i (F_i \cdot \underline{y})$. To be precise, we should have a dual in the last component --- equivalently, it would be more natural to have the dual $(\PP^{n-1})^{\vee}$- but we silently use the duality isomorphism to unburden the notation.

The geometry of a tri-tensor is an old and fascinating topic, with one of the first references being \cite{cayley1869memoir}. See also, \cite{Ottaviani_1, tanturri2013degeneracy} for a modern account. The degeneracy locus $S$, the rational variety $X$ and all the other characters appearing in this picture can be seen to be induced by $Y$ via the obvious projections.

Finally, we associate to $Y_{n,s,m}$ one last variety $Z_{n,s,m}$, which is far from being a Fano variety. Denote by
\begin{equation}\label{def:triple_grass}
G_{n,s,m} \defeq \Gr(2, n)\times\Gr(2, s+1) \times \Gr(n+m-2, n+m),
\end{equation}
then define the vanishing locus $Z_{n,s,m}=V(\omega)\subset G_{n,s,m}$. In our notation,
\begin{equation}\label{eq:z}
Z_{n,s,m}= (G_{n,s,m} \,,\, \mU^{\vee} \boxtimes \mU^{\vee} \boxtimes \mU^{\vee}).
\end{equation}
The reason for this apparently arbitrary choice is that by Borel--Bott--Weil 
\[ 
\mathrm{H}^0(\PP^{n-1}\times \PP^s \times \PP^{n+m-1}, \of(1,1,1) ) \cong \mathrm{H}^0 \left(G_{n,s,m}\,,\, \mU^{\vee} \boxtimes \mU^{\vee} \boxtimes \mU^{\vee}\right).
\]
Notice that this holds true for any product $\Gr(k_3, n) \times \Gr(k_1, s+1) \times \Gr(k_2, n+m)$. However, with this particular choice of ambient spaces, we have that the dimension of $Z$ is equal to $2(s-m-1)$, i.e.~$\dim Z = 2\cdot\dim S$.

This is not a coincidence: in fact the purpose of this paper is to show that as long as the triple $(n,s,m)$ satisfies the constraints 
\[
m+2 \leq s \leq 2m+3, \quad n >2s-2m-3,
\]
one has an isomorphism of schemes
\[
Z_{n,s,m} \cong \Hilb^2(S_{n,s,m}).
\]

We stress that the condition $n>2s-2m-3$ is not an if and only if. In fact our proof goes via the explicit construction of a morphism to the Hilbert scheme, which exists and happens to be an isomorphism in that range. This a priori says nothing on the other cases. However, we show that for, e.g.~$m=0,1$ our bound is optimal, see \Cref{ex.whitesurfaces} and \Cref{example.3foldm1} where we explicitly compute the Hodge numbers of $Z$ and $\Hilb^2(S)$ in the range $n \leq 2s-2m-3$, thus confirming that they are different.

\subsection{A conjectural relation with the Hilbert scheme of the Fano variety \texorpdfstring{$Y$}{}}
Before discussing some examples, we mention one more relation between $Z$ and the Hilbert scheme, that we leave for future research to explore. More precisely, we conjecture that $Z$ can be realised as a Hilbert scheme \emph{on $Y$} as well. In fact, if we call $\PP_{1,1,n-3}  \defeq \PP^1 \times \PP^1 \times \PP^{n-3}$ contained fibre-wise in $\PP_{n,s,m} \defeq \PP^{n-1}\times \PP^s \times \PP^{n+m-1}$, we can consider the incidence variety
\[
F = \set{(p, \PP_{1,1,n-3}) \subset \PP_{n,s,m} \times  G_{n,s,m} | \ p \in \PP_{1,1,n-3}},
\]
with $G_{n,s,m}$ as in \eqref{def:triple_grass}.

Notice that $F$ can be described as the zero locus
\[ F= (\Fl(1,2,n) \times \Fl(1,2,s+1) \times \Fl(n+m-3, n+m-2, n+m), \of(1,0) \otimes \of(1,0) \otimes \mQ_2) \; , \]
where the first two bundles are the pullback of the ample line bundles from $\PP^{n-1}$ and $\PP^s$, and the last is the pullback of the rank 2 quotient bundle in $\Gr(n+m-2, n+m)$. This implies that the projection $p$ from $F$ to $Y$ is a $\PP^{n-2} \times \PP^{s-1} \times \PP^{n+m-2}$-bundle, while the projection $q$ from $F$ to $G_{n,s,m}$ is a $\PP^{n+m-3} \cup \PP^{n+m-3} $ generically, degenerating to a $\PP^{n+m-3} $ over $Z$. We believe that one should also have an isomorphism
\[Z 
\cong \Hilb_{\PP_{1,1,n-3}}(Y)
\]
with the induced isomorphism in cohomology realised by the classical \emph{Abel-Jacobi type} $p^*q_*$-map. However, we have not been able to prove this for the time being, and we hope to return to it in the future.

\subsection{Toy case I: determinantal}\label{subsec:toy_I}
As a first special sub-case, it is worth mentioning the case $m=0$, in which case $S$ is a determinantal hypersurface in $\PP^s$. Also, we need $s \leq 3$, for from threefolds onwards $S$ will in fact be singular.

With $s=3$, the last case excluded by our theorem, $n=3$, is the one of a cubic surface, and we can immediately show that $Z$ and $\Hilb^2(S)$ are not isomorphic: as a matter of fact, $e_{\mathrm{top}}(\Hilb^2(S)) = e_{\mathrm{top}}(Z)+21$, where the discrepancy by $21$ should be accounted for by the 6 exceptional lines plus the other 15 which are strict transforms of lines passing through two of the six points. 

If we consider $n=4$, we have that $X \cong \Gamma \cong S$, and with three different representations. In this case the isomorphism was already known to be true from \cite[Proposition 1]{iliev2019hyperkaehler}. In fact in this case $S$ is a determinantal quartic K3 surface, presented with three different models, hence $Z$ is a hyperk\"ahler fourfold. This construction is very classical, starting from \cite{cayley1869memoir}, and the relations between the three models has been recently explored in \cite{festi2013cayley, oguiso2017isomorphic, veniani2019symmetries}. 

Another interesting case which is covered by our theorem is the one of a determinantal \emph{quintic surface}, which we will explore in detail in \Cref{sec:45}.

\subsection{Toy case II: sub-determinantal}\label{subsec:toy_II}
Another relevant case is the sub-determinantal case, i.e.~for $m=1$. In this case we can borrow some results from \cite[\S 2.2]{kuznetsovc5} and \cite[Proposition 3.6]{bfmt} to readily compute the invariants of $S$. We remark that our smoothness condition forces $3 \leq s \leq 5$.
In fact, the $k$-th degeneracy locus $D_{n-k}(\varphi)$ has expected codimension $k(m+k)$ in the ambient space $\PP^s$. Tence for $m=1$, $k=2$, it has expected codimension 6, i.e.~$D_{n-2}(\varphi)=\emptyset$.

Notice how in this case the map $X \to \PP^s$ is particularly simple, indeed it agrees with the blow up map $X= \Bl_S \PP^s \to \PP^s$.

The structure sheaf of $S=D_{n-1}(\varphi)$ admits a resolution by the so-called Eagon--Northcott complex. In this case, it takes the form: 
\begin{equation}\label{eq:e-n}
    0 \to \mF^{\vee} \to \mE^{\vee} \to \det (\mE^{\vee}) \otimes \det (\mF) \to (\det (\mE^{\vee}) \otimes \det (\mF))|_{D_{n-1}(\varphi)} \to 0.
\end{equation}
This holds more in general for every $\mE, \mF$ of rank $(n+1,n)$: in our case it will suffices to take $\mE \cong \of_{\PP^s}^{\oplus n+1}$ and $\mF\cong \of_{\PP^s}(1)^{\oplus n}$.

One can use suitably twisted versions of this complex to compute some invariants of $S$, as shown in the next examples. Of course one could have worked directly on $\Gamma$ as well, or on $X$, applying the blow-up formula.

\begin{proposition}\label{prop.genus} 
Fix $s=3$ and $n>1$. Let $\varphi\colon \mathcal O_{\PP^3}^{\oplus n+1} \to \mathcal O_{\PP^3}(1)^{\oplus n}$ be a general morphism of vector bundles. Consider the smooth curve $S_n\defeq S_{n,3,1} = D_{n-1}(\varphi) \subset \PP^3$. Then 
\[
g(S_n)=n\binom{n}{3} - (n+1) \binom{n-1}{3}, \qquad \deg(S)= \binom{n+1}{2}.
\]
\end{proposition}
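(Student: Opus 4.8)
My plan is to read off both invariants from the Hilbert polynomial of $S$, which I would obtain by specialising the Eagon--Northcott resolution \eqref{eq:e-n}. With $\mE = \of_{\PP^3}^{\oplus n+1}$ and $\mF = \of_{\PP^3}(1)^{\oplus n}$ one has $\mF^\vee = \of(-1)^{\oplus n}$, $\mE^\vee = \of^{\oplus n+1}$ and $\det(\mE^\vee)\otimes\det(\mF) = \of(n)$, so \eqref{eq:e-n} is a locally free resolution of $\of(n)|_S$. Twisting by $\of(-n)$ I would obtain the Hilbert--Burch resolution of the structure sheaf
\[
0 \to \of_{\PP^3}(-n-1)^{\oplus n} \to \of_{\PP^3}(-n)^{\oplus n+1} \to \of_{\PP^3} \to \of_S \to 0 .
\]
Left-exactness here relies on the fact that, in the range $m=1$, $s=3$, the locus $D_{n-2}(\varphi)$ has expected codimension $6$ and is therefore empty for general $\varphi$; hence $S=D_{n-1}(\varphi)$ has the expected codimension $2$, is Cohen--Macaulay, and the complex is genuinely a resolution.

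From this resolution, additivity of the Euler characteristic and $\chi(\of_{\PP^3}(d)) = \binom{d+3}{3}$ give, for every $t$,
\[
\chi(\of_S(t)) = \binom{t+3}{3} - (n+1)\binom{t-n+3}{3} + n\binom{t-n+2}{3} .
\]
Since $S$ is a smooth (hence integral) curve, this Hilbert polynomial is linear in $t$: the coefficient of $t$ equals $\deg(S)$, while the constant term equals $\chi(\of_S) = 1 - g(S)$. Thus both numbers come out by expanding the right-hand side and isolating its linear and constant parts, and no further geometric input is needed.

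To finish I would expand the two pieces. For the degree, one checks that the $t^3$ and $t^2$ coefficients cancel and that the coefficient of $t$ collapses to $\tfrac{1}{6}\cdot 3n(n+1) = \binom{n+1}{2}$. For the genus, I would evaluate at $t=0$ and apply the reflection identities $\binom{3-n}{3} = -\binom{n-1}{3}$ and $\binom{2-n}{3} = -\binom{n}{3}$ (valid as polynomial identities in $n$); the constant term then becomes $1 + (n+1)\binom{n-1}{3} - n\binom{n}{3}$, whence $g(S) = 1 - \chi(\of_S) = n\binom{n}{3} - (n+1)\binom{n-1}{3}$, as claimed.

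The computation is entirely routine once the resolution is in hand; the only point genuinely requiring care is the exactness of the twisted Eagon--Northcott complex, equivalently the emptiness of $D_{n-2}(\varphi)$ for general $\varphi$, which is what guarantees that $S$ is a Cohen--Macaulay curve of codimension $2$ and hence that the arithmetic genus extracted from the Hilbert polynomial agrees with the geometric genus of the smooth curve $S$. As a consistency check, $n=2$ recovers $\deg=3$ and $g=0$, i.e.~the twisted cubic realised as the rank-$\le 1$ locus of a general $2\times 3$ matrix of linear forms.
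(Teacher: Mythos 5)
Your strategy coincides with the paper's: both twist the Eagon--Northcott complex \eqref{eq:e-n} into the resolution $0 \to \of_{\PP^3}(-n-1)^{\oplus n} \to \of_{\PP^3}(-n)^{\oplus n+1} \to \of_{\PP^3} \to \of_{S_n} \to 0$ and read the genus and degree off the Hilbert polynomial; your binomial manipulations are correct (the reflection identities, the cancellation of the $t^3$ and $t^2$ terms, and the $n=2$ sanity check all hold up).

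However, there is one genuine gap, and it sits exactly where the paper spends a substantial part of its proof. You write that $S$ is ``smooth (hence integral)'' and use this to justify $\chi(\of_S) = 1 - g(S)$. Smoothness does not imply integrality, nor even connectedness: a disjoint union of smooth curves is smooth. If $S$ had $c$ connected components of genera $g_1, \dots, g_c$, the constant term of the Hilbert polynomial would be $c - \sum_i g_i$, not $1 - g$, so identifying $1-\chi(\of_S)$ with the genus of the smooth curve $S$ genuinely requires knowing $h^0(\of_S) = 1$. The paper proves this connectedness by splitting \eqref{eq:e-n} into the two short exact sequences $0 \to \of_{\PP^3}(-n-1)^{\oplus n} \to \of_{\PP^3}(-n)^{\oplus n+1} \to K \to 0$ and $0 \to K \to \of_{\PP^3} \to \of_{S_n} \to 0$, and observing that $h^0(K) = h^1(K) = 0$ (the relevant cohomology of the negative twists on $\PP^3$ vanishes), whence $h^0(\of_{S_n}) = h^0(\of_{\PP^3}) = 1$. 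The ingredient you need is therefore already contained in the resolution you wrote down; but as stated, your proof replaces this necessary argument with an invalid inference.
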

\begin{proof}
Consider the Eagon--Northcott resolution of $\of_{S_n}$ from \eqref{eq:e-n}. Twisting back, we have 
\[
0 \to \of_{\PP^3}(-n-1)^{\oplus n } \to \of_{\PP^3}(-n)^{\oplus n+1} \to \of_{\PP^3} \to \of_{S_n} \to 0.
\]
We have that \[ 
\chi(\of_{S_n}) = \chi(\of_{\PP^3}) + \chi(\of_{\PP^3}(-n-1)^{\oplus n } ) - \chi(\of_{\PP^3}(-n)^{\oplus n+1}),
\]
i.e.
\[
1-g(S_n)= 1-n\binom{n}{3} + (n+1) \binom{n-1}{3}.
\]
Therefore, we only need to check that $S$ is connected. This can be done by splitting \eqref{eq:e-n} in two short exact sequences \[0 \to \of_{\PP^3}(-n-1)^{\oplus n } \to \of_{\PP^3}(-n)^{\oplus n+1} \to K \to 0
\]
\[
0 \to K \to \of_{\PP^3} \to \of_{S_n} \to 0.
\]
Since $K$ has no cohomologies in $h^0, h^1$, it follows that $h^0(\of_{S_n}) = h^0(\of_{\PP^3})=1$.

In order to compute the degree, it suffices to check the Hilbert polynomial, which for a curve we know to be equal to $p_{S_n}(t)=dt+1-g$, where $d$ is the degree. Since in general $p_{S_n}(t)=at+b$, we have of course $\chi(\of_{S_n})=p_{S_n}(0)=1-g$ and \[\chi(\of_{S_n}(1))=p_{S_n}(1)= 4- n\binom{n-1}{3}+ (n+1) \binom{n-2}{3},\] where we used as before the sequence \eqref{eq:e-n}. It follows that \[a= 3+ n \left(\binom{n}{3}- \binom{n-1}{3}\right) - (n+1) \left(\binom{n-1}{3}-\binom{n-2}{3}\right),
\]
which simplifies to $a=\binom{n+1}{2}$. The result follows.
\end{proof}

\begin{proposition}\label{prop:euler_char}
Fix $s \in \{3,4\}$ and $n>1$. Let $\varphi\colon \mathcal O_{\PP^s}^{\oplus n+1} \to \mathcal O_{\PP^s}(1)^{\oplus n}$ be a general morphism of vector bundles. Then the smooth subvariety $S_{s,n}\defeq S_{n,s,1} = D_{n-1}(\varphi) \subset \PP^s$, of codimension $2$, has topological Euler characteristic
\[
e_{\mathrm{top}}(S_{s,n})=
\begin{cases}
4n^2-2n^3+(3n-4)\binom{n}{2}-\binom{n}{3} & \textrm{if }s=3 \\
n^2(10-10n+3n^2)+\binom{n}{2}(-10+15n-6n^2)+\binom{n}{3}(4n-5)-\binom{n}{4} & \textrm{if }s=4
\end{cases}
\]
\end{proposition}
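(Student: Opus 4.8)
The plan is to exploit the isomorphism $S_{s,n}=S_{n,s,1}\cong\Gamma_{n,s,1}$ of \Cref{lem:proj}, which realises $S$ as a smooth complete intersection and reduces the problem to an intersection-theoretic computation on a product of projective spaces. Recall that $\Gamma_{n,s,1}=(\PP^s\times\PP^{n-1},\of(1,1)^{\oplus n+1})$ is cut out by $n+1$ general sections of $\of(1,1)$; since the triple $(n,s,1)$ lies in the smoothness range $m+2\le s\le 2m+3$ for $s\in\{3,4\}$, the locus $\Gamma$ is smooth of the expected dimension $s+(n-1)-(n+1)=s-2$, so it is a genuine complete intersection with $N_\Gamma=\of(1,1)^{\oplus n+1}\vert_\Gamma$.

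First I would compute the total Chern class of the tangent bundle from the normal sequence $0\to T_\Gamma\to T_{\PP^s\times\PP^{n-1}}\vert_\Gamma\to N_\Gamma\to 0$. Writing $h=c_1(\of(1,0))$ and $k=c_1(\of(0,1))$, so that $\mathrm{H}^\ast(\PP^s\times\PP^{n-1})=\Z[h,k]/(h^{s+1},k^n)$ with $\int_{\PP^s\times\PP^{n-1}}h^sk^{n-1}=1$, this gives
\[
c(T_\Gamma)=\left.\frac{(1+h)^{s+1}(1+k)^n}{(1+h+k)^{n+1}}\right\vert_\Gamma.
\]
Since $e_{\mathrm{top}}(S)=\int_\Gamma c_{s-2}(T_\Gamma)$ by Gauss--Bonnet, and the class of $\Gamma$ in the ambient product is $[\Gamma]=c_{n+1}\!\left(\of(1,1)^{\oplus n+1}\right)=(h+k)^{n+1}$, the projection formula turns the problem into the single coefficient extraction
\[
e_{\mathrm{top}}(S)=\int_{\PP^s\times\PP^{n-1}}\left[\frac{(1+h)^{s+1}(1+k)^n}{(1+h+k)^{n+1}}\right]_{s-2}(h+k)^{n+1},
\]
where $[\,\cdot\,]_{s-2}$ denotes the homogeneous part of degree $s-2$.

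It then remains to expand $(1+h+k)^{-(n+1)}$ as a power series, multiply out against $(1+h)^{s+1}(1+k)^n(h+k)^{n+1}$, and read off the coefficient of $h^sk^{n-1}$ under the truncations $h^{s+1}=0$, $k^n=0$. For $s=3$ this is a degree-one extraction producing a curve invariant; as a sanity check it must agree with $2-2g(S_n)$ computed from \Cref{prop.genus}, and indeed a short manipulation of binomial coefficients confirms $2-2g(S_n)=4n^2-2n^3+(3n-4)\binom{n}{2}-\binom{n}{3}$. For $s=4$ the surface case is a degree-two extraction computing $\int_\Gamma c_2(T_\Gamma)$, and collecting the resulting binomial sums into closed form yields the second displayed formula.

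The main obstacle is purely bookkeeping: organising the binomial expansion of the rational Chern-class expression and isolating the unique surviving monomial $h^sk^{n-1}$ under the relations $h^{s+1}=k^n=0$, then resumming the many binomial terms into the compact polynomials stated. This is routine but error-prone, so I would cross-check the final expressions at small values of $n$ (and, for $s=3$, against \Cref{prop.genus}) before committing to them.
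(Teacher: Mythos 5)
Your proposal is correct, but it takes a genuinely different route from the paper. The paper works directly on $\PP^s$ and invokes Pragacz's formula for the topological Euler characteristic of a smooth degeneracy locus (\cite{Pragacz}, Example 5.8): it computes the Segre classes of $\mathcal E = \of_{\PP^s}(1)^{\oplus n}$, assembles the relevant Schur polynomials $s_{(2,1)}, s_{(3,1)}, s_{(2,1,1)}$, and evaluates the resulting expression against $c(\PP^s)$. You instead pass through \Cref{lem:proj} to the complete-intersection model $\Gamma_{n,s,1}=(\PP^s\times\PP^{n-1},\of(1,1)^{\oplus n+1})$ and apply Gauss--Bonnet with the normal bundle sequence, reducing everything to a coefficient extraction of $h^s k^{n-1}$ in $\left[(1+h)^{s+1}(1+k)^n(1+h+k)^{-(n+1)}\right]_{s-2}(h+k)^{n+1}$. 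Both are sound; indeed the paper itself remarks that ``one could have worked directly on $\Gamma$ as well.'' Your route is more elementary and self-contained (standard Chern class manipulation on a product of projective spaces, no degeneracy-locus machinery), and for $s=3$ it collapses to the one-line identity $e_{\mathrm{top}} = (3-n)\binom{n+1}{2}-\binom{n+1}{3}$, which matches the stated formula; for $s=4$ the degree-two extraction gives $\left[10-5(n+1)+\binom{n+2}{2}\right]\binom{n+1}{2}+\left[5n-(n+1)(n+5)+2\binom{n+2}{2}\right]\binom{n+1}{3}+\left[\binom{n}{2}-n(n+1)+\binom{n+2}{2}\right]\binom{n+1}{4}$, which agrees with the claimed polynomial (e.g.\ both give $55$ at $n=4$ and $4$ at $n=2$). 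What the paper's approach buys is intrinsic applicability: Pragacz's formula works for any smooth degeneracy locus regardless of whether a convenient complete-intersection model is available, and it avoids appealing to the genericity of the bilinear sections cutting out $\Gamma$ (which in your argument is needed to justify smoothness via Bertini, though one can alternatively transport smoothness through the isomorphism of \Cref{lem:proj}). The only thing to flag is that you assert rather than carry out the final binomial resummation for $s=4$; since the setup is correct and the extraction is finite and checkable (as above), this is a cosmetic rather than a substantive gap.
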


\begin{proof}
See \Cref{sec:Euler}.
\end{proof}

\begin{lemma} 
Fix $s = 4$. Then the smooth surface $S_{n,4,1} \subset \PP^4$ has irregularity $q=0$, and geometric genus
\[
p_g(S_{n,4,1}) = n \binom{n}{4} - (n+1) \binom{n-1}{4}.
\]
\end{lemma}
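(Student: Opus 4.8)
The plan is to compute the two invariants via the Eagon--Northcott resolution \eqref{eq:e-n}, exactly as in the proofs of \Cref{prop.genus} and \Cref{prop:euler_char}. Specialising to $s=4$, $m=1$, and writing $S=S_{n,4,1}$, the twisted-back resolution of $\of_S$ reads
\[
0 \to \of_{\PP^4}(-n-1)^{\oplus n} \to \of_{\PP^4}(-n)^{\oplus n+1} \to \of_{\PP^4} \to \of_S \to 0.
\]
For the geometric genus I would use the identity $p_g=h^2(\of_S)=h^0(\omega_S)$ together with $p_g = h^{2,0}(S)$; since $S$ is a smooth surface, $p_g$, $q=h^1(\of_S)$ and $\chi(\of_S)=1-q+p_g$ are linked. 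The cleanest route is to extract $\chi(\of_S)$ from the resolution by additivity of the Euler characteristic:
\[
\chi(\of_S)=\chi(\of_{\PP^4})+\chi\bigl(\of_{\PP^4}(-n-1)^{\oplus n}\bigr)-\chi\bigl(\of_{\PP^4}(-n)^{\oplus n+1}\bigr),
\]
and then to read off $p_g$ after establishing $q=0$. Using $\chi(\of_{\PP^4}(d))=\binom{d+4}{4}$, the middle terms give $n\binom{3-n}{4}$ and $(n+1)\binom{4-n}{4}$ respectively; rewriting $\binom{3-n}{4}$ and $\binom{4-n}{4}$ in terms of $\binom{n}{4}$ and $\binom{n-1}{4}$ via the reflection $\binom{-a}{k}=(-1)^k\binom{a+k-1}{k}$ is the routine part that produces the stated closed form.

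The two genuinely independent pieces of work are the vanishings $q=h^1(\of_S)=0$ and the connectedness $h^0(\of_S)=1$. For connectedness I would split \eqref{eq:e-n} into the two short exact sequences
\[
0 \to \of_{\PP^4}(-n-1)^{\oplus n} \to \of_{\PP^4}(-n)^{\oplus n+1} \to K \to 0,\qquad 0 \to K \to \of_{\PP^4} \to \of_S \to 0,
\]
exactly as in \Cref{prop.genus}; the long exact sequences, together with the vanishing of $h^i(\of_{\PP^4}(-j))$ for $i<4$ and appropriate $j$, force $h^0(K)=h^1(K)=0$, whence $h^0(\of_S)=h^0(\of_{\PP^4})=1$. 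The irregularity $q=0$ follows from the same two sequences: from the first, $h^1(K)$ and $h^2(K)$ vanish because all the relevant intermediate cohomology groups $h^i(\of_{\PP^4}(-n))$ and $h^i(\of_{\PP^4}(-n-1))$ vanish on $\PP^4$ for $1\le i\le 3$; feeding this into the second sequence yields $h^1(\of_S)\cong h^2(K)=0$. Given $q=0$ and the computed $\chi(\of_S)$, one gets $p_g=\chi(\of_S)-1$.

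I expect the bookkeeping with binomial coefficients --- matching $\chi(\of_S)-1$ to the asserted $n\binom{n}{4}-(n+1)\binom{n-1}{4}$ --- to be mechanical, since the cohomology of line bundles on $\PP^4$ concentrates in degrees $0$ and $4$ and all the middle groups drop out; the main conceptual point, and the only place where genericity of $\varphi$ is genuinely used, is ensuring that \eqref{eq:e-n} is exact (so that $S$ is arithmetically Cohen--Macaulay of the expected codimension $2$, with $D_{n-2}(\varphi)=\emptyset$ as noted in \Cref{subsec:toy_II}) and that the resolution restricts correctly. Once exactness and the vanishing $h^1(\of_S)=0$ are in hand, the formula for $p_g$ is immediate.
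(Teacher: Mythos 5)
Your proof is correct, and it diverges from the paper's argument in exactly one step: the vanishing of the irregularity. For $\chi(\of_S)$ and connectedness you do precisely what the paper does --- additivity of $\chi$ along the Eagon--Northcott resolution \eqref{eq:e-n} on $\PP^4$, and the splitting into two short exact sequences as in the proof of \Cref{prop.genus}. For $q=0$, however, the paper does not compute on $\PP^4$ at all: it invokes the isomorphism $\Gamma \cong S_n$ of \Cref{lem:proj}, realises $\Gamma$ as the complete intersection $(\PP^4 \times \PP^{n-1}, \of(1,1)^{\oplus n+1})$, and applies the Lefschetz hyperplane section theorem, so that only the middle-degree cohomology of $\Gamma$ can carry nonzero level, forcing $q=0$. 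Your route stays entirely inside the resolution: $h^1(\of_S) \cong h^2(K)$ from the second short exact sequence, and $h^2(K)=0$ from the first, because $h^i(\PP^4, \of(d))=0$ for $0<i<4$ and every $d$; this is the same mechanism the paper uses one degree lower to get connectedness, just pushed up. Both arguments are valid. Yours is more elementary and self-contained (no need for $\Gamma$ or Lefschetz, and it generalises verbatim to give $h^i(\of_S)=0$ for $0<i<\dim S$ for degeneracy loci in higher $\PP^s$), while the paper's approach buys more than the lemma states: Lefschetz controls the full Hodge structure of $\Gamma \cong S_n$, which is exactly what the Remark following the lemma exploits to conclude, for threefold degeneracy loci in $\PP^5$, not only $h^1(\of)=h^2(\of)=0$ but also $h^{1,1}=2$ --- a statement your sheaf-cohomological argument does not reach.
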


\begin{proof}
The Euler characteristic of the structure sheaf $\chi(\of_{S_n})$ is computed as in the previous proposition, using the sequence \eqref{eq:e-n} on $\PP^4$. We have in particular that \[
\chi(\of_{S_n})= 1 + n \binom{n}{4} - (n+1) \binom{n-1}{4}.
\]
Moreover, $S_n$ is connected and $q=0$. The first statement can be proven as in the curve case. The second follows from the isomorphism $\Gamma \cong S_n$. On the other hand, we know that $\Gamma = (\PP^4 \times \PP^{n-1}, \of(1,1)^{\oplus n+1})$. Hence, by Lefschetz hyperplane section theorem, the only weight where the cohomology of $\Gamma$ has non-zero level is the middle one; therefore, $q=0$.
\end{proof}

\begin{remark}
From the above lemma one immediately deduces that $p_g=q=0$ as long as $n<4$. Moreover the same argument tells us that for a threefold which is a degeneracy loci in $\PP^5$, $h^1(\of_{S_n})=h^2(\of_{S_n})=0$ and $h^{1,1}(S_n)=2$.
\end{remark}

A nice observation is that the sub-determinantal case $n=4,s=4, m=1$ and the determinantal case $n=5, s=3, m=0$ both give rise to a determinantal quintic, since $\Gamma$ in both cases is given by
\[
\Gamma=(\PP^3 \times \PP^4, \of(1,1)^{\oplus 5}),
\]
albeit the role of $\PP^3$ and $\PP^4$ is exchanged.

\section{Some examples}\label{sec:ex}
In this section, we collect some examples that do fall within the `good range' prescribed by \Cref{{thm:mainthm_intro}}, and that therefore realise the desired isomorphism $\Hilb^2(S) \cong Z$. For the sake of completeness, we write down the Hodge numbers of the varieties involved, which can be computed using the methods detailed in \cite[\S 3.2]{dft}.

\subsection{The cases \texorpdfstring{$n=3, s=3, m=1$}{} and \texorpdfstring{$n=4, s=2, m=0$}{}}
We discuss first an example which is quite classical. Let us consider $S \subset \PP^3$, where $S$ is a degree 6, genus 3 space curve given by the intersection of four cubics (i.e.~the maximal minors of a $4 \times 3$ matrix of linear forms).

In the notation of the previous section, according to \eqref{eqn:X_nsm} in the case $(n,s,m)=(3,3,1)$ we have $X \subset \PP^3 \times \PP^3$, given as the complete intersection of three divisors of bi-degree $(1,1)$, i.e.~$X=(\PP^3 \times \PP^3, \of(1,1)^{\oplus 3})$.
This variety $X$ is the Fano 3-fold \textbf{2--12} in the original Mori--Mukai notation, see \cite{fanography, dft, morimukai}.

Following the discussion of the previous section, $X$ is identified with the blow-up $\Bl_{S} \PP^3$, see also \cite[2-12]{ccgk}.
One can immediately compute the Hodge numbers of $X$, these being
\begin{center}
{\small
\[\begin{matrix}

&&& 0 && 3 && 3 &&0&&& \\
&&&&0 &&2&&0&&&&\\
&&&&&0&&0&&&&&\\
&&&&&&1 &&&&&&
\end{matrix}\]}
\end{center}
 The rational map $\eta: \PP^3 \dashrightarrow \PP^3$ induced by this construction is the the cubo-cubic Cremona transformation of $\PP^3$ already known to Max Noether, see \cite{noether1871ueber,reede2019cubo} and it is the only non- trivial Cremona transformation of $\PP^3$ that is resolved by just one blow up along a smooth curve, see \cite{katz1987cubo}.

The second variety in the picture is $Y= (\PP^3 \times \PP^3 \times \PP^2, \of(1,1,1))$. This is a Fano 7-fold, with anti-canonical class equal to $-K_Y \cong \of_Y(3,3,2)$. We can apply the Cayley trick from $Y$ to $X$ to determine the Hodge numbers of $Y$, which can be also computed using the standard Koszul resolution. These are:
\begin{center}
{\small
\[\begin{matrix}
0 && 0&& 0 && 3 && 3 && 0 && 0 &&0 \\
&0&&0&&0 &&9 &&0 &&0&&0&\\
&&0&&0&&0&&0&&0&&0 && \\
&&&0&&0&&6&&0&&0&&&\\
&&&& 0 && 0 && 0 &&0&&&& \\
&&&&&0 &&3&&0&&&&&\\
&&&&&&0&&0&&&&&&\\
&&&&&&&1 &&&&&&&
\end{matrix}\]}
\end{center}

Finally, we consider the variety $Z=(\Gr(2,4)\times \Gr(2,4) \times \Gr(2,3), \mU^{\vee} \boxtimes \mU^{\vee} \boxtimes \mU^{\vee} )$. By our theorem, $Z \cong \Hilb^2(S) \cong \Sym^2(S)$. We can check that $K_Z \cong \of_Z(0,0,1)$ and that its Hodge numbers are the expected ones, namely

\begin{center}
{\small
\[\begin{matrix}

&&&&3 &&10&&3&&&&\\
&&&&&3&&3&&&&&\\
&&&&&&1 &&&&&&
\end{matrix}\]}
\end{center}

Finally, notice that, using the notation of the previous section, the associated triple to $(3,3,1)$ is $(4,2,0)$. In this case $S_{4,2,0} \subset \PP^2$ is a plane quartic curve, and $Z$ describes its symmetric square as well.

\subsection{The case \texorpdfstring{$n=5, s=3, m=0$}{} and \texorpdfstring{$n=4, s=4,  m=1$}{}}\label{sec:45}
As before, these two cases define the same surface, in two different presentations. In fact, the first triple of invariants immediately identifies $S_{5,3,0} \subset \PP^3$ as a quintic determinantal surface, which has Picard rank 2 in general. On the other hand $S_{4,4,1} \subset \PP^4$ is a codimension 2 surface defined by 5 quartic equations. However, thanks to \Cref{lem:proj} they are both isomorphic to the same $\Gamma$, which is $$\Gamma= (\PP^3 \times \PP^4, \of(1,1)^{\oplus 5}).$$
 The Hodge numbers of $S$ (of course regardless of the presentation) are as follows:

\begin{center}
{\small
\[\begin{matrix}

&&&&4 &&45&&4&&&&\\
&&&&&0&&0&&&&&\\
&&&&&&1 &&&&&&
\end{matrix}\]}
\end{center}

We can consider the associated $Z =(\Gr(2,5) \times \Gr(3,5) \times \Gr(2,4), \mU^{\vee}\boxtimes \mU^{\vee} \boxtimes \mU^{\vee})$, which is of course the same in both cases. The Hodge numbers of $Z \cong \Hilb^2(S)$ (see also \Cref{sec:hodge-poly}) are:

\begin{center}
{\small
\[\begin{matrix}
&&&10&&184&&1097&&184&&10&&&\\
&&&& 0 && 0 && 0 &&0&&&& \\
&&&&&4 &&46&&4&&&&&\\
&&&&&&0&&0&&&&&&\\
&&&&&&&1 &&&&&&&
\end{matrix}\]}
\end{center}

\subsection{The case \texorpdfstring{$n=6, s=5, m=1$}{}}\label{sec:651}

If for $m\in\set{0,1}$ in the surface case our condition $n>2s-2m-3$ corresponded essentially to a non-negative Kodaira dimension, for $(m,s)=(1,5)$, the limit case which is not covered by \Cref{thm:mainthm_intro}, is a threefold of general type: in fact, we are going to show in \Cref{example.3foldm1} that $Z$ and $\Hilb^2(S)$ are \emph{not} isomorphic. In fact, the first case with $m=1, s=5$ which is covered by our Theorem is for $n=6$. In this case, the associated triple to $(6,5,1)$ is again $(6,5,1)$.

Our threefold $S_{6,5,1} \subset \PP^5$ is defined by 7 minors (of degree 6): it is isomorphic to $\Gamma =( \PP^5 \times \PP^5, \of(1,1)^{\oplus 7})$.

We can compute the Hodge numbers of $S_{5,6,1}$, these being:

\begin{center}
{\small
\[\begin{matrix}
&&&& 29 && 520 && 520 &&29&&&& \\
&&&&&0 &&2&&0&&&&&\\
&&&&&&0&&0&&&&&&\\
&&&&&&&1 &&&&&&&
\end{matrix}\]}
\end{center}

The Hodge numbers of $\Hilb^2(S) \cong Z \subset \Gr(2,6) \times \Gr(2,6) \times \Gr(5,7)$ are

\begin{center}
{\tiny
\[\begin{matrix}

&406&&15080&&150020 &&271250 &&150020 &&15080&&406&\\
&&0&&87&&1560&&1560&&87&&0 && \\
&&&0&&0&&8&&0&&0&&&\\
&&&& 29 && 520 && 520 &&29&&&& \\
&&&&&0 &&3&&0&&&&&\\
&&&&&&0&&0&&&&&&\\
&&&&&&&1 &&&&&&&
\end{matrix}\]}
\end{center}
Notice that the Euler characteristic $e_{\mathrm{top}}(Z) = 593502$ coincides with $e_{\mathrm{top}}(\Hilb^2(S))$, which is computed in \cref{sec:hodge_3fold}.

\subsection{The cases \texorpdfstring{$n=4, s=5, m=2$}{} and \texorpdfstring{$n=6, s=3, m=0$}{}}

These two associated cases describe two different presentation for $S$, as a smooth determinantal sextic and as a codimension 3 surface in $\PP^5$. Here $\Gamma$ can be described as $\Gamma= (\PP^5 \times \PP^3, \of(1,1)^{\oplus 6})$. The Hodge numbers for $S$ are:
\begin{center}
{\small
\[\begin{matrix}
&&&&10 &&86&&10&&&&\\
&&&&&0&&0&&&&&\\
&&&&&&1 &&&&&&
\end{matrix}\]}
\end{center}

We can compute the Hodge numbers of $\Hilb^2(S) \cong Z \subset \Gr(2,6) \times \Gr(2,4) \times \Gr(4,6)$, which are:
\begin{center}
{\small
\[\begin{matrix}
&&&55&&870&&3928&&870&&55&&&\\
&&&& 0 && 0 && 0 &&0&&&& \\
&&&&&10 &&87&&10&&&&&\\
&&&&&&0&&0&&&&&&\\
&&&&&&&1 &&&&&&&
\end{matrix}\]}
\end{center}

\section{Key construction and preparation lemmas}\label{section.linearalgebra}
In this section we explain the key constructions that will allow us to prove \Cref{thm.Hilbiso}.

We fix integers $n\geq 3$, $m\geq 0$, and $s\in\{m+2\, ,\dots,\, 2m+3\}$.

\subsection{Constructing points in the triple Grassmannian}
We shall consider a \emph{general} map of vector bundles
\begin{equation}\label{eqn:bundlemap}
\varphi \colon \of_{\mathbb P^s}^{\oplus n+m} \to \of_{\mathbb P^s}(1)^{\oplus n}
\end{equation}
along with the associated  $(m+1)$-codimensional, \emph{smooth} degeneracy locus 
\[
S = S_{n,s,m} = D_{n-1}(\varphi) \hookrightarrow \mathbb P^s.
\]
Indeed, by the genericity of $\varphi$, each degeneracy locus $D_k(\varphi) = \{p \in \mathbb P^s | \rank(\varphi(p))\leq k\} \subset \mathbb P^s$ has codimension in $\PP^s$ equal to the expected one, namely $(n-k)(n+m-k)$. In the range $s\in \{m+2\,,\ldots,\,2m+3\}$, we have $\dim D_{n-1}(\varphi)>0$, and the singularities may only arise in $D_{n-2}(\varphi)=\emptyset$, whence the smoothness.

Equivalently, $\varphi$ can be understood from an algebraic point of view as a matrix
\[ 
M=\begin{pmatrix}
f_1^1 & \ldots & f_{n+m}^1 \\
f_1^2 &\ldots & f_{n+m}^2 \\
\vdots& \ddots & \vdots \\
f_1^n & \ldots & f_{n+m}^n \\
\end{pmatrix}\,\in\,\mathsf{Mat}_{n,n+m}\left(\mathrm{H}^0(\mathbb P^s,\mathcal{O}_{\mathbb P^s}(1))\right)
\]
of linear forms $f^i_j$ depending on $s+1$ variables. We shall switch from $\varphi$ to $M$ freely in what follows. 

Working in the affine setup, one is led to consider the locus 
\[
\widehat{S}=\Set{v\in V_{s+1}\,\vert\, \rank(M_v)=n-1}\subset V_{s+1},
\]
where $M_v \in \mathsf{Mat}_{n,n+m}(\mathbb C)$ denotes the matrix $M$ evaluated at the point $v \in V_{s+1}$. By the linearity of $f^{i}_{j}$, the subvariety $\widehat{S}\subset V_{s+1}$ descends to a subvariety $S\hookrightarrow \mathbb{P}^s=\mathbb{P}(V_{s+1})$, in such a way that $\widehat{S}\cup \set{0}$ is the affine cone over $S\hookrightarrow \mathbb{P}^s$. We shall use the notation $[v]$ to denote a point in projective space, to emphasise that we take the projective point of view.

Consider the set-theoretic map
\[ 
\psi\colon S\to \mathbb{P}^{n-1}, \qquad [v]\mapsto [\alpha_v],
\]
where $[\alpha_v]$ is determined by the $1$-dimensional $\mathbb C$-vector space
\[
\ker\left(\of_{\mathbb P^s}(-1)\big|_{[v]}^{\oplus n} \xrightarrow{\varphi_{[v]}^t} \of_{\mathbb P^s}\big|_{[v]}^{\oplus n+m} \right) \,\subset\,\of_{\mathbb P^s}(-1)\big|_{[v]}^{\oplus n} = V_n = \mathbb C^n.
\]
Of course, if $M$ is the $n\times (n+m)$ matrix of linear forms corresponding to the morphism $\varphi$, then $\alpha_v\in V_n$ is defined (up to scalar multiplication) by $M_v^t\cdot\alpha_v=0$.

\begin{lemma}\label{lemma.psi}
The association $[v]\mapsto [\alpha_v]$ defines an algebraic morphism $\psi\colon S \to \mathbb P^{n-1}$.
\begin{proof}
As already mentioned, since $\varphi$ is general, one has $D_{n-2}(\varphi)=\emptyset$, and thus $\varphi_{[v]}$ has rank precisely $n-1$ for every $[v] \in S$. Therefore the sheaf $\mathcal L=\textrm{coker}(\varphi)|_S$ is a locally free sheaf of rank $1$, and moreover it is globally generated by $n$ sections $\alpha^1,\ldots,\alpha^n$, arising from the linear dependence relations
\[
\alpha^1_{[v]}F^1_{[v]} + \cdots + \alpha^n_{[v]}F^n_{[v]}= 0,\quad [v] \in S,
\]
where $F^i_{[v]}$ denotes the $i$-th row of the matrix associated to $\varphi_{[v]} = M_v$. The data $(\mathcal L,\alpha^1,\ldots,\alpha^n)$ defines the sought after algebraic morphism $\psi\colon S \to \mathbb P^{n-1}$.
\end{proof}
\end{lemma}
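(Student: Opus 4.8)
The plan is to upgrade the pointwise recipe $[v]\mapsto[\alpha_v]$ into a sub-line-bundle of a trivial vector bundle on $S$, which by the universal property of projective space is precisely the datum of an algebraic morphism to $\mathbb P^{n-1}$. The only genuine input needed is that $\varphi$ has \emph{constant} rank along $S$: since $\varphi$ is general one has $D_{n-2}(\varphi)=\emptyset$, so $\rank(\varphi_{[v]})=n-1$ for every $[v]\in S$, and consequently the kernels $\langle\alpha_v\rangle=\Ker(M_v^t)$ form a family of lines of constant dimension $1$.

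First I would dualise and restrict $\varphi$ to $S$, obtaining the transpose morphism of vector bundles $\varphi^t|_S\colon \mathcal O_S(-1)^{\oplus n}\to \mathcal O_S^{\oplus n+m}$, whose fibre at $[v]$ is the linear map $\alpha\mapsto M_v^t\alpha$ with kernel exactly $\langle\alpha_v\rangle$. Because this morphism has \emph{constant} rank $n-1$ on the smooth variety $S$, I may invoke the standard fact that a constant-rank morphism of vector bundles has locally free kernel, image and cokernel, with the associated short exact sequences locally split; hence $\mathcal K\defeq\Ker(\varphi^t|_S)$ is a genuine line subbundle of $\mathcal O_S(-1)^{\oplus n}$. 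This is the step carrying all the content, and the main obstacle: it is exactly here that the emptiness of $D_{n-2}(\varphi)$ (equivalently the smoothness of $S$) enters, for without constant rank the fibrewise kernels would jump in dimension and $\mathcal K$ would fail to be a subbundle.

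To read off $\psi$, I would twist by $\mathcal O_S(1)$, turning $\mathcal K$ into a line subbundle $\mathcal K(1)\subset \mathcal O_S^{\oplus n}=S\times V_n$ of the \emph{trivial} rank-$n$ bundle. By the universal property of $\mathbb P^{n-1}=\mathbb P(V_n)$, such a sub-line-bundle is the same datum as an algebraic morphism $S\to\mathbb P^{n-1}$, and unwinding the identifications its value at $[v]$ is the line $\mathcal K(1)|_{[v]}=\langle\alpha_v\rangle\subset V_n$, that is $[\alpha_v]$; this is precisely $\psi$, which is therefore algebraic. Equivalently, and closer to the relations $\sum_i\alpha^i F^i_{[v]}=0$, one can phrase the construction through the cokernel line bundle $\mathcal L=\mathrm{coker}(\varphi|_S)$, whose $n$ coordinate sections $\alpha^1,\dots,\alpha^n$ have no common zero on $S$ (since $\alpha_v\neq 0$ everywhere) and hence define the same base-point-free morphism $\psi$.
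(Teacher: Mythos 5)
Your proposal is correct and takes essentially the same approach as the paper: both arguments rest on the single key input that $D_{n-2}(\varphi)=\emptyset$ forces $\varphi|_S$ to have constant rank $n-1$, so that the relevant kernel/cokernel sheaf is a line bundle and the universal property of $\mathbb P^{n-1}$ yields the morphism. Your primary phrasing via the kernel line subbundle $\Ker(\varphi^t|_S)\subset\mathcal O_S(-1)^{\oplus n}$ is simply the dual of the paper's formulation via the cokernel line bundle $\mathcal L=\mathrm{coker}(\varphi)|_S$ equipped with its $n$ generating sections, an equivalence you yourself point out in your closing remark.
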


Our key construction starts now.
Let $[v],[w]\in S$ be distinct points and consider the space
\begin{equation}\label{eqn:crucial_plane}
\pi_{v,w} = \langle M_v^t\cdot \alpha_w,M_w^t\cdot \alpha_v \rangle \subset V_{n+m}. 
\end{equation}


The following lemma aims to explain the geometric role of $\pi_{v,w}$ just defined.

\begin{lemma}\label{lemma.Hilbertwellposed}
Let $[v],[w]$ be two distinct points in $S$. Then:
\begin{enumerate}[{\normalfont (1)}]
    \item $\dim\pi_{v,w}=0$ if and only if the line $\ell_{v,w}$ joining $[v],[w]$ is entirely contained in $S$, and $\psi(\ell_{v,w})$ reduces to a point in $\PP^{n-1}$.
    \item $\dim\pi_{v,w}=1$ if and only if the line $\ell_{v,w}$ joining $[v],[w]$ is entirely contained in $S$, and $\psi(\ell_{v,w})$ is a line in $\PP^{n-1}$.
    \item $\dim\pi_{v,w}=2$ if and and only if $\psi(\ell_{v,w})\subset \PP^{n-1}$ intersects the line between $[\alpha_v]$ and $[\alpha_w]$ in precisely two points.
\end{enumerate}
\end{lemma}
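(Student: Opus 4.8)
The engine of the proof is a single bilinear identity. Writing a point of $\ell_{v,w}$ as $[sv+tw]$ and a vector of $\langle \alpha_v,\alpha_w\rangle$ as $\beta=s'\alpha_v+t'\alpha_w$, linearity of the entries of $M$ gives $M_{sv+tw}^t=sM_v^t+tM_w^t$, so that, using $M_v^t\alpha_v=M_w^t\alpha_w=0$,
\[
M_{sv+tw}^t\beta = st'\,(M_v^t\alpha_w)+ts'\,(M_w^t\alpha_v)=st'\,p+ts'\,q,
\]
where $p:=M_v^t\alpha_w$ and $q:=M_w^t\alpha_v$ are precisely the two generators of $\pi_{v,w}$. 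First I would record the kernel facts: since $[v]\in S$ and $D_{n-2}(\varphi)=\emptyset$, the matrix $M_v$ has rank exactly $n-1$, so $\ker M_v^t=\langle \alpha_v\rangle$; hence $p=0$ if and only if $\alpha_w\in\langle \alpha_v\rangle$, i.e.\ $[\alpha_v]=[\alpha_w]$, and symmetrically for $q$. In particular $p=0\iff q=0\iff[\alpha_v]=[\alpha_w]$, so $\dim\pi_{v,w}=0$ is equivalent to $[\alpha_v]=[\alpha_w]$, while $\dim\pi_{v,w}\geq 1$ is equivalent to $\alpha_v,\alpha_w$ being linearly independent.

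Next I would run the three cases through this identity, reading off which points of $\ell_{v,w}$ land in $S$ and where $\psi$ sends them. If $\dim\pi_{v,w}=0$ then $p=q=0$, so $M_{sv+tw}^t\alpha_v=0$ for every $[sv+tw]$; thus $\ell_{v,w}\subseteq S$ and $\alpha_{sv+tw}\propto\alpha_v$, i.e.\ $\psi(\ell_{v,w})$ is the single point $[\alpha_v]=[\alpha_w]$. If $\dim\pi_{v,w}=1$ then $p,q$ are both nonzero and proportional, say $q=\mu p$ with $\mu\neq 0$; the identity becomes $M_{sv+tw}^t\beta=(st'+\mu ts')p$, which vanishes for the nonzero choice $\beta=s\alpha_v-\mu t\alpha_w$. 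Hence again $\ell_{v,w}\subseteq S$, and $\psi([sv+tw])=[s\alpha_v-\mu t\alpha_w]$; as $[s:t]$ runs over $\PP^1$ this is a linear isomorphism onto the chord $L:=\overline{[\alpha_v][\alpha_w]}$, so $\psi(\ell_{v,w})$ is exactly the line $L$.

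For the generic case $\dim\pi_{v,w}=2$ the vectors $p,q$ are independent, so $st'p+ts'q=0$ with $\beta=s'\alpha_v+t'\alpha_w$ forces $st'=ts'=0$. Reading off the solutions shows that the only points $[u]\in\ell_{v,w}$ lying on $S$ with $\alpha_u\in\langle \alpha_v,\alpha_w\rangle$ — equivalently with $\psi([u])\in L$ — are $[v]$ and $[w]$ themselves, mapping to the two distinct points $[\alpha_v]\neq[\alpha_w]$. Interpreting $\psi(\ell_{v,w})$ as $\psi(\ell_{v,w}\cap S)$ (in cases (1)--(2) the whole line lies in $S$, so the two readings agree), this says exactly that $\psi(\ell_{v,w})\cap L=\{[\alpha_v],[\alpha_w]\}$ consists of precisely two points. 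Finally, I would close all three biconditionals simultaneously by a trichotomy argument: the values $\dim\pi_{v,w}\in\{0,1,2\}$ are mutually exclusive and exhaustive, and so are the three geometric conclusions (a single point; the whole line $L$; exactly two points of $L$), since a point, a full line and a two-point set are pairwise distinct configurations. Having proved the three forward implications, each reverse implication then follows formally.

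The routine part is the bilinear identity and the rank-$(n-1)$ kernel bookkeeping. The main subtlety I anticipate is case (3): one must make sense of $\psi(\ell_{v,w})$ when the line is not contained in $S$ — resolved by reading it as $\psi(\ell_{v,w}\cap S)$ — and one must justify the clean count of exactly two intersection points with $L$, which is precisely where independence of $p,q$ is used to exclude the interior points of the line from contributing. The other point requiring care is the converse in case (2): one needs that $\psi(\ell_{v,w})$ being a line forces it to coincide with the chord $L$, which is immediate once $[\alpha_v]\neq[\alpha_w]$, and hence rules out $\dim\pi_{v,w}=2$; packaging everything through the trichotomy is what lets me avoid arguing these converses by hand.
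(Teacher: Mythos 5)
Your proof is correct and follows essentially the same route as the paper's: the same bilinear identity $M_{sv+tw}^{t}(s'\alpha_v+t'\alpha_w)=st'\,(M_v^t\alpha_w)+ts'\,(M_w^t\alpha_v)$, the same kernel facts coming from $D_{n-2}(\varphi)=\emptyset$, and the same case analysis on $\dim\pi_{v,w}$. The only difference is logical packaging: you close all three converses at once via mutual exclusivity of the three geometric configurations, whereas the paper argues the converses of (2) and (3) directly by exhibiting a third point and deriving a dependence between the two generators --- both amount to the same computation.
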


\begin{proof}
We proceed case by case.
\begin{enumerate}
    \item $\langle\alpha_v\rangle=\langle\alpha_w\rangle$ if and only if $M_v^t\cdot\alpha_w=M_w^t\cdot\alpha_v=0$; therefore $\dim\pi_{v,w}=0$ if and only if $\dim\langle\alpha_v,\alpha_w\rangle=1$ and the statement follows by
    \[ 
    M^t_{\lambda v+\mu w}\cdot \alpha_v = M^t_{\lambda v}\cdot \alpha_v + M^t_{\mu w}\cdot \alpha_v = 0 
    \]
    for every $\lambda,\mu\in\mathbb{C}$.
    \item If $\dim\pi_{v,w}=1$ then there exist $\delta_1,\delta_2\in\mathbb{C}$ such that $\delta_1M_w^t\cdot\alpha_v+\delta_2M_v^t\cdot\alpha_w=0$. Therefore $M_{\lambda v+\mu w}^t(\lambda\delta_1\alpha_v+\mu\delta_2\alpha_w)=\lambda\mu\left(\delta_2M^t_{v}\cdot \alpha_w+\delta_1M^t_{w}\cdot\alpha_v\right)=0$, so that $[\lambda v+\mu w]\in S$ for every $\lambda,\mu\in\mathbb{C}$ and the kernels of the transpose matrices are aligned in $\PP^{n-1}$.\\
    For the converse, first notice that $\dim\pi_{v,w}\neq 0$. Moreover, if there exists another point $[u]\in \ell_{v,w}\cap S$ with $\alpha_u=\delta_1\alpha_v+\delta_2\alpha_w$, and $u=\lambda v+\mu w$. Then $\lambda\delta_2 M^t_v\cdot\alpha_w+\mu\delta_1 M^t_w\cdot\alpha_v=0$, so that $\dim\pi_{v,w}=1$.
    \item By contradiction, suppose there exists a third point $[u]\in \ell_{v,w}\cap S$ with $\alpha_u=\delta_1\alpha_v+\delta_2\alpha_w$, and $u=\lambda v+\mu w$. Then $\lambda\delta_2 M^t_v\cdot\alpha_w+\mu\delta_1 M^t_w\cdot\alpha_v=M^t_u\cdot\alpha_u=0$, so that $\dim\pi_{v,w}\leq 1$. Viceversa, if $\dim\pi_{v,w}\leq 1$ then $\ell_{v,w}\subset S$ by the above items so that $\psi(\ell_{v,w})$ intersects the line between $[\alpha_v],[\alpha_w]$ either in one point or in infinite points.\qedhere
\end{enumerate}
\end{proof}

\begin{definition}\label{def.omegazerolocus}
We shall use the shorthand notation
\[ 
G_{n,s,m}=\Gr(2,n)\times \Gr(2,s+1)\times \Gr(n+m-2,n+m),
\]
and we shall denote with the same letter $\mU$ the tautological (sub)bundle on each Grassmannian.  
There is a natural section $\omega \in \mathrm{H}^0(G_{n,s,m},\mU^{\vee} \boxtimes \mU^{\vee} \boxtimes \mU^{\vee})$ associated to $M$, defined by
\[ 
\omega\colon V_n\otimes V_{s+1}\otimes V_{n+m} \longrightarrow \mathbb{C},
\qquad (a,u,b) \mapsto a^t\cdot M_u\cdot b. 
\]
We denote by $Z=V(\omega)\subset G_{n,s,m}$ its zero scheme.
\end{definition}

We note that there is an identity
\[
Z= \left\{ P\in G_{n,s,m} \,\colon \, \omega\big|_{P} \equiv 0 \right\}
\]
where, if $P=(\rho_1,\rho_2,\rho_3)$, then $\omega|_P\equiv 0$ means that $\omega(a,u,b)=0$ for every $a\in\rho_1$, $u\in\rho_2$, $b\in\rho_3$.

\begin{definition}\label{def.alternativeHilbertmap}
To any pair of distinct points $[v],[w]\in S$ such that $\dim\pi_{v,w}=2$ we can associate the point
\[ 
P_{[v],[w]} = \left( \langle\alpha_v \, , \, \alpha_w\rangle \; , \; \langle v,w\rangle \; , \; \pi_{v,w}^{\perp} \; \right) \in G_{n,s,m},
\]
where $\pi_{v,w}$ is as defined in \Cref{eqn:crucial_plane}.
\end{definition}

\begin{remark}
By \Cref{lemma.Hilbertwellposed}, there is an immersion
\[ 
H\to G_{n,s,m}, \quad [v]+[w] \mapsto P_{[v],[w]},
\]
where $H=\{ [v]+[w] \in \Sym^2(S) \setminus S \,|\, \dim\pi_{v,w}=2 \}\subset \Sym^2(S)$.
\end{remark}

\begin{lemma}
Let $P_{[v],[w]}$ be as in \Cref{def.alternativeHilbertmap}, then
$P_{[v],[w]}\in Z$.
\end{lemma}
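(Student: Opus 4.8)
The plan is to verify directly that $\omega$ vanishes identically on the point $P_{[v],[w]}=(\langle\alpha_v,\alpha_w\rangle,\langle v,w\rangle,\pi_{v,w}^\perp)$, i.e.\ that $\omega(a,u,b)=a^t\cdot M_u\cdot b=0$ for every $a\in\langle\alpha_v,\alpha_w\rangle$, $u\in\langle v,w\rangle$ and $b\in\pi_{v,w}^\perp$. Since $u\mapsto M_u$ is linear and $\omega$ is trilinear, it is enough to expand a general such triple; the only inputs needed are the defining relations $M_v^t\cdot\alpha_v=0$ and $M_w^t\cdot\alpha_w=0$ coming from \Cref{lemma.psi}, together with the definition \eqref{eqn:crucial_plane} of $\pi_{v,w}$.

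First I would rewrite the pairing as $\omega(a,u,b)=a^t\cdot M_u\cdot b=(M_u^t\cdot a)^t\cdot b$ and compute the vector $M_u^t\cdot a\in V_{n+m}$. Writing $u=\mu_1 v+\mu_2 w$ and $a=\lambda_1\alpha_v+\lambda_2\alpha_w$, linearity gives
\[
M_u^t\cdot a=\mu_1\lambda_1\,M_v^t\cdot\alpha_v+\mu_1\lambda_2\,M_v^t\cdot\alpha_w+\mu_2\lambda_1\,M_w^t\cdot\alpha_v+\mu_2\lambda_2\,M_w^t\cdot\alpha_w.
\]
The two outer terms vanish by $M_v^t\cdot\alpha_v=0=M_w^t\cdot\alpha_w$, leaving
\[
M_u^t\cdot a=\mu_1\lambda_2\,M_v^t\cdot\alpha_w+\mu_2\lambda_1\,M_w^t\cdot\alpha_v\in\pi_{v,w},
\]
since $M_v^t\cdot\alpha_w$ and $M_w^t\cdot\alpha_v$ are precisely the two generators of $\pi_{v,w}$ in \eqref{eqn:crucial_plane}. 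Thus for every admissible $a$ and $u$ the vector $M_u^t\cdot a$ lies in $\pi_{v,w}$, and since $b\in\pi_{v,w}^\perp$ we conclude $\omega(a,u,b)=(M_u^t\cdot a)^t\cdot b=0$. As this holds for all spanning vectors, $\omega|_{P_{[v],[w]}}\equiv 0$, i.e.\ $P_{[v],[w]}\in Z$.

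This argument is essentially bookkeeping rather than a genuine obstacle: the whole point is that the cross terms of the expansion reproduce exactly the generators of $\pi_{v,w}$, whose annihilation by $b$ is built into the third coordinate of $P_{[v],[w]}$. The only care required is in matching conventions — keeping track of which transposes send $M_u^t\cdot a$ into $V_{n+m}$, and reading $\pi_{v,w}^\perp$ as the annihilator under the pairing implicit in \Cref{def.omegazerolocus} (the silent duality in the last factor flagged earlier in the paper). Once those identifications are fixed, the computation above is forced and the statement follows with no appeal to the hypothesis $\dim\pi_{v,w}=2$, which is needed only to ensure that $P_{[v],[w]}$ is a well-defined point of $G_{n,s,m}$.
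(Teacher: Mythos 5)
Your proof is correct and follows essentially the same route as the paper: expand $\omega(a,u,b)$ by trilinearity, observe that the diagonal terms die by $M_v^t\cdot\alpha_v=0=M_w^t\cdot\alpha_w$, and that the surviving cross terms are exactly the generators of $\pi_{v,w}$, hence annihilated by $b\in\pi_{v,w}^\perp$. Your closing remark — that $\dim\pi_{v,w}=2$ is needed only for $P_{[v],[w]}$ to be a well-defined point of $G_{n,s,m}$ — is also accurate and consistent with \Cref{def.alternativeHilbertmap}.
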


\begin{proof}
We need to prove that $\omega|_{P_{[v],[w]}} \equiv 0$.
Let $a=h_1\alpha_v+h_2\alpha_w$ and $u=\lambda v+\mu w$. Then
\begin{align*}
    \omega(a,u,b) 
    &= (h_1\alpha^t_v+h_2\alpha^t_w)\cdot M_{\lambda v+\mu w} \cdot b \\
    &= h_1\mu \,(\alpha_v^t\cdot M_w) \cdot b + h_2\lambda\, (\alpha_w^t\cdot M_v)\cdot b \\
    &= 0.\qedhere
\end{align*}
\end{proof}

\subsection{Main technical result}

In the following, given $\rho\in \Gr(2,k)$ we shall denote by $[\rho]\subset\mathbb{P}^{k-1}$ the projective line defined by $\rho$. Also, given two distinct points $[v]$ and $[w]$ in $\PP^s$, we shall denote by $\ell_{v,w} \subset \PP^s$ the line connecting them.

The following is the main technical result of the paper.

\begin{theorem}\label{thm.omegazerolocus}
Let $P=(\rho_1\,,\rho_2\,,\rho_3)\in Z$.
Then one of the following holds:
\begin{itemize}
\item [\textbf{a}.] there exist two (and only two) distinct points $[v],[w]\in S\cap [\rho_2]$ such that $P=P_{[v],[w]}$,
\item [\textbf{b}.] $[\rho_2]\subset S$ and $\psi([\rho_2])$ reduces to a point in $[\rho_1] \subset \PP^{n-1}$,
\item [\textbf{c}.] there exists exactly one point $[v]\in S$ where $[\rho_2]$ is tangent and such that $[\alpha_v]\in[\rho_1]$.
\item [\textbf{d}.] $[\rho_2]\subset S$ and $[\rho_1]=\psi(\ell_{v,w})\subset \PP^{n-1}$.
\end{itemize}
\begin{proof}
Consider the linear subspace
\[ 
W_{(\rho_1,\rho_2)} = \Set{ M^t_u\cdot a \, \vert\, a\in\rho_1, \, u\in \rho_2} \subset V_{n+m}.
\]
Now, since $(\rho_1,\rho_2,\rho_3)\in Z$, we have $W_{(\rho_1,\rho_2)} \subset \rho_3^{\perp}$ so that $\dim W_{(\rho_1,\rho_2)}\leq 2$. Let us proceed case by case.

Suppose $\dim W_{(\rho_1,\rho_2)}=0$ first. This means that $M_u^t\cdot a=0$ for every $u\in\rho_2$ and for every $a\in\rho_1$. This is impossible since it would imply $\dim\ker(M_u^t)\geq 2$, i.e.~$\rank (M_u)<n-1$. But this is in contradiction with the generality assumption on $M$.

Next, let us suppose $\dim W_{(\rho_1,\rho_2)}=1$. This means that we can find a basis $\{M_{u_1}^t\cdot {a_1}\}$ for $W_{(\rho_1,\rho_2)}$. We can complete to bases $\{a_1,a_2\}\subset \rho_1$ and $\{u_1,u_2\}\subset \rho_2$, in such a way that 
    \[
    M_{u_1}^t\cdot a_2=M_{u_2}^t\cdot a_1=0 \;. 
    \]
    In fact, if $\{a_1,a_2'\}$ is any basis for $\rho_1$, then
    $M_{u_1}^t\cdot (h a_1+a_2') = 0$
    for some $h\in \mathbb{C}$. Hence it is sufficient to chose $a_2=ha_1+a_2'$. A similar argument provides the required choice of $u_2\in\rho_2$. In particular, $[u_1],[u_2]\in S$ and by assumption $M_{u_1}^t\cdot a_1+M_{u_2}^t\cdot a_2=0$ (up to a possible rescale of $a_2$). Therefore
    \[ M_{\lambda u_1+\mu u_2}^t\cdot (\mu a_1+\lambda a_2)= \lambda\mu (M_{u_1}^t\cdot a_1+M_{u_2}^t\cdot a_2) = 0 \;, \]
    so that $[\rho_2]\subset S$ and $[\rho_1]=\psi(\ell_{v,w})\subset \PP^{n-1}$. This is the case $\boldit{d}$ in the statement.

Finally, let us suppose $\dim W_{(\rho_1,\rho_2)}=2$, which means $W_{(\rho_1,\rho_2)}=\rho_3^{\perp}$. Notice that we can choose bases $\{a_1,a_2\}\subset \rho_1$ and $\{u_1,u_2\}\subset \rho_2$, in such a way that
    \[ 
    W_{(\rho_1,\rho_2)} = \langle\nu_{1,1},\nu_{2,1}\rangle = \langle\nu_{1,2},\nu_{2,2}\rangle,
    \]
    where
    \[ 
    \nu_{i,j}=M_{u_i}^t\cdot a_j, \qquad i,j\in\{1,2\} \; .
    \]
    In fact, if $\dim\langle \nu_{1,1}\, ,\, \nu_{2,1}\rangle = 1$ then there exist $\delta_1,\delta_2\in \mathbb{C}$ such that $\delta_1\nu_{1,1}+\delta_2\nu_{2,1}=0$. It follows that$M_{\delta_1 u_1+\delta_2 u_2}^t\cdot a_1=0$ so that $[\delta_1 u_1+\delta_2 u_2]\in S$. Now if $\delta_1\neq 0$ we define $u_1'=\delta_1u_1+\delta_2u_2$ and we replace the basis $\{u_1,u_2\}$ with $\{u_1',u_2\}$. Similarly, assuming $\dim\langle\nu_{1,2}\, ,\, \nu_{2,2}\rangle = 1$ one can eventually replace $\{a_1,a_2\}$ with $\{a_1,a_2'\}$.

Hence there exists a matrix $\Phi \in \mathsf{Mat}_{2,2}(\mathbb C)$ realising a coordinate change
\[ \begin{pmatrix}
\nu_{1,2}&\nu_{2,2}
\end{pmatrix}
=-\begin{pmatrix}
\nu_{1,1}&\nu_{2,1}
\end{pmatrix} \cdot
\Phi,
\]
where we adopted the notation $\begin{pmatrix}\nu_{1,j}& \nu_{2,j}\end{pmatrix}$ to denote the $(n+m)\times 2$ matrix whose columns are $\nu_{1,j}$ and $\nu_{2,j}$. Our aim is now to study vectors $v\in\rho_2$ corresponding to points in $[v]\in S$ with the additional property that $[\alpha_v]\in[\rho_1]$. Such a point is given by the choice of a nonzero vector 
\[
\begin{pmatrix}\lambda\\ \mu\end{pmatrix}\in\mathbb{C}^2
\]
together with scalars $\delta_1,\delta_2\in\mathbb{C}$, not both vanishing, such that
\[ 
M^t_{\lambda u_1+\mu u_2}\cdot (\delta_1 a_1+\delta_2 a_2) = 0.
\]
In particular, it is not restrictive to assume $\delta_2\neq 0$ since $\dim\langle\nu_{1,1}\, ,\,\nu_{2,1}\rangle = 2$. Rename $\delta=\delta_1\delta_2^{-1}\in\mathbb{C}$ and consider the following equalities:
\[ 
\begin{aligned}
M_{\lambda u_1+\mu u_2}^t\cdot (\delta a_1 + a_2) &= \lambda\delta M_{u_1}^t\cdot a_1 + \mu\delta M_{u_2}^t\cdot a_1 + \lambda M_{u_1}^t\cdot a_2 + \mu M_{u_2}^t\cdot a_2  \\
&= \lambda\delta\nu_{1,1}+\mu\delta\nu_{2,1}+\lambda\nu_{1,2}+\mu\nu_{2,2}\\
&= \begin{pmatrix}\nu_{1,1}&\nu_{2,1}\end{pmatrix}\cdot \begin{pmatrix}\delta\lambda\\ \delta\mu\end{pmatrix} + \begin{pmatrix} \nu_{1,2}&\nu_{2,2}\end{pmatrix}\cdot\begin{pmatrix}\lambda\\ \mu\end{pmatrix}  \\
&= \begin{pmatrix}\nu_{1,1}&\nu_{2,1}\end{pmatrix}\cdot \begin{pmatrix}\delta\lambda\\ \delta\mu\end{pmatrix}-
\begin{pmatrix} \nu_{1,1}&\nu_{2,1}\end{pmatrix}\cdot
\Phi\cdot
\begin{pmatrix}\lambda\\ \mu\end{pmatrix}  \\
&= \begin{pmatrix}\nu_{1,1}&\nu_{2,1}\end{pmatrix}\cdot\left\{ \delta \cdot \mathrm{id} - \Phi
\right\}\cdot\begin{pmatrix}\lambda\\ \mu\end{pmatrix}.
\end{aligned}
\]
Now, since $\dim\langle \nu_{1,1},\nu_{2,1}\rangle=2$ the last line vanishes if and only if $\delta$ is an eigenvalue of $\Phi$ and $\begin{pmatrix}\lambda& \mu\end{pmatrix}^t$ is an eigenvector relative to $\delta$.
Since $\mathbb{C}$ is algebraically closed, we conclude that the line $[\rho_2]\subset\mathbb{P}^s$ always intersects $S$ in (at least) one point $[v]=[\lambda u_1+\mu u_2]$ satisfying $[\alpha_v]\in[\rho_1]$.
More precisely we have the following three possibilities.

\smallbreak
\begin{itemize}
\item [$\boldit{a}$.] \emph{The matrix $\Phi$ admits two different eigenvalues $\delta$ and $\theta$.}\\
In this case we have two (independent) eigenvectors 
\[
\begin{pmatrix} \lambda_{\delta}\\ \mu_{\delta}\end{pmatrix}, \quad \begin{pmatrix} \lambda_{\theta}\\ \mu_{\theta}\end{pmatrix}
\]
and the above discussion provides two distinct points
\begin{align*}
[v]&=[\lambda_{\delta}u_1+\mu_{\delta}u_2]\in S\cap[\rho_2], \\ [w]&=[\lambda_{\theta}u_1+\mu_{\theta}u_2]\in S\cap[\rho_2].
\end{align*}
Notice that by \Cref{lemma.Hilbertwellposed} either we are in case $\boldit{d}$ of the statement or the points $[v],[w]\in S\cap[\rho_2]$ are the only ones satisfying the additional property $[\alpha_v],[\alpha_w]\in[\rho_1]$.
Clearly, in this last case $\rho_1=\langle \alpha_v,\alpha_w\rangle$, $\rho_2=\langle v,w\rangle$, and $W_{(\rho_1,\rho_2)} = \langle M_v^t\alpha_w, M_w^t\alpha_v\rangle=\pi_{v,w}$; therefore $(\rho_1,\rho_2,\rho_3)=P_{[v],[w]}$. This is item $\boldit{a}$ in the statement.
\item [$\boldit{b}$.] \emph{The matrix $\Phi$ admits one eigenvalue $\delta$ whose eigenspace is $2$-dimensional.}\\
In this case every non-trivial $\begin{pmatrix}\lambda& \mu\end{pmatrix}^t\in \mathbb{C}^2$ is an eigenvector so that the line defined by $[\rho_2]$ in $\mathbb{P}^s$ is entirely contained in $S$. On the other hand the matrix $M_v^t$ admits the same kernel $\delta a_1+a_2\in \rho_1$ for every $v\in\rho_2$. This is item $\boldit{b}$ in the statement.
\item [$\boldit{c}$.] \emph{The matrix $\Phi$ admits only one eigenvalue $\delta$ whose eigenspace is $1$-dimensional.}\\
In this case any eigenvector $\begin{pmatrix}\lambda& \mu\end{pmatrix}^t$ corresponds to the same point
$[v]=[\lambda u_1+\mu u_2]\in S$.
Hence $[v]$ is the only point in the intersection $[\rho_2]\cap S$ such that $[\alpha_v]\in[\rho_1]$. Moreover, in this case the \emph{algebraic} multiplicity of $\delta$ is $2$; i.e.~the multiplicity of the intersection $[\rho_2]\cap S$ is $2$ at $[v]$.
This is item $\boldit{c}$ in the statement.
\end{itemize}
The proof is now complete.
\end{proof}
\end{theorem}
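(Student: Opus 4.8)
The plan is to analyze a point $P=(\rho_1,\rho_2,\rho_3)\in Z$ by extracting from the vanishing condition $\omega|_P\equiv 0$ the single most useful linear-algebraic invariant, namely the subspace
\[
W_{(\rho_1,\rho_2)} = \Set{ M^t_u\cdot a \,\vert\, a\in\rho_1,\, u\in\rho_2 } \subset V_{n+m}.
\]
The condition $P\in Z$ says precisely that $W_{(\rho_1,\rho_2)}\subset\rho_3^\perp$, and since $\rho_3$ has codimension $2$, this forces $\dim W_{(\rho_1,\rho_2)}\leq 2$. The whole proof should then be organised as a case distinction on $\dim W_{(\rho_1,\rho_2)}\in\{0,1,2\}$, and I expect each value to correspond to exactly one (or, in the degenerate value $2$, to a refined subdivision) of the four alternatives in the statement.

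First I would rule out $\dim W_{(\rho_1,\rho_2)}=0$: this would mean $M_u^t\cdot a=0$ for all $u\in\rho_2$ and all $a\in\rho_1$, hence $\dim\ker M_u^t\geq 2$ for some $u$, i.e.~$\rank(M_u)<n-1$, contradicting the genericity of $M$ (which gives $D_{n-2}(\varphi)=\emptyset$). Next, the case $\dim W_{(\rho_1,\rho_2)}=1$ should land in item $\textbf{d}$: choosing a basis $\{M^t_{u_1}\cdot a_1\}$ of the line and completing it, one can arrange bases $\{a_1,a_2\}\subset\rho_1$ and $\{u_1,u_2\}\subset\rho_2$ with $M^t_{u_1}\cdot a_2=M^t_{u_2}\cdot a_1=0$ and $M^t_{u_1}\cdot a_1+M^t_{u_2}\cdot a_2=0$ (after rescaling). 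A direct bilinearity computation then shows $M^t_{\lambda u_1+\mu u_2}\cdot(\mu a_1+\lambda a_2)=0$ for all $\lambda,\mu$, whence $[\rho_2]\subset S$ and $[\rho_1]=\psi(\ell_{v,w})$; this is where \Cref{lemma.Hilbertwellposed} is invoked to interpret the conclusion.

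The substantive case, which I expect to be the main obstacle, is $\dim W_{(\rho_1,\rho_2)}=2$, i.e.~$W_{(\rho_1,\rho_2)}=\rho_3^\perp$. Writing $\nu_{i,j}=M^t_{u_i}\cdot a_j$, the plan is to first normalise the bases so that $\langle\nu_{1,1},\nu_{2,1}\rangle$ and $\langle\nu_{1,2},\nu_{2,2}\rangle$ are both $2$-dimensional (if not, a relation $\delta_1\nu_{1,1}+\delta_2\nu_{2,1}=0$ produces a degeneracy point in $S$ allowing a basis change). Having done so, the two spanning matrices $(\nu_{1,1}\;\nu_{2,1})$ and $(\nu_{1,2}\;\nu_{2,2})$ are related by an invertible $2\times 2$ change of coordinates $\Phi$, and the decisive reduction is that the equation $M^t_{\lambda u_1+\mu u_2}\cdot(\delta a_1+a_2)=0$ characterising points $[v]\in[\rho_2]\cap S$ with $[\alpha_v]\in[\rho_1]$ is equivalent, after collecting terms and using $\Phi$, to $(\delta\cdot\mathrm{id}-\Phi)\binom{\lambda}{\mu}=0$. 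This is an eigenvalue/eigenvector problem for $\Phi$.

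The final step is purely the Jordan theory of the $2\times 2$ matrix $\Phi$ over $\mathbb C$. The three Jordan types yield the three outcomes: two distinct eigenvalues give two distinct points $[v],[w]\in S\cap[\rho_2]$ with $\rho_1=\langle\alpha_v,\alpha_w\rangle$, $\rho_2=\langle v,w\rangle$ and $W_{(\rho_1,\rho_2)}=\pi_{v,w}$, recovering $P=P_{[v],[w]}$ (item $\textbf{a}$, possibly collapsing to $\textbf{d}$ via \Cref{lemma.Hilbertwellposed}); a single eigenvalue with a $2$-dimensional eigenspace forces every direction in $[\rho_2]$ to lie in $S$ with common kernel, giving item $\textbf{b}$; and a single eigenvalue with a $1$-dimensional eigenspace gives one point of intersection but with algebraic multiplicity $2$, i.e.~tangency, which is item $\textbf{c}$. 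The genuinely delicate bookkeeping is ensuring the base-change normalisations are legitimate (that the degeneracy points they invoke actually lie in $S$) and tracking how the two-distinct-eigenvalue subcase can fall back into $\textbf{d}$; the rest is bilinear algebra and the classification of $\Phi$ up to conjugacy.
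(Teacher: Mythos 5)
Your proposal is correct and follows essentially the same route as the paper's proof: the same invariant $W_{(\rho_1,\rho_2)}\subset\rho_3^{\perp}$, the same case division on $\dim W_{(\rho_1,\rho_2)}\in\{0,1,2\}$, the same basis normalisations, and the same reduction to the eigenvalue problem $(\delta\cdot\mathrm{id}-\Phi)\binom{\lambda}{\mu}=0$ with the Jordan classification of $\Phi$ yielding cases $\boldit{a}$, $\boldit{b}$, $\boldit{c}$ (and the fallback to $\boldit{d}$ via \Cref{lemma.Hilbertwellposed}). The points you flag as delicate --- legitimacy of the base changes and the collapse of the two-eigenvalue case into $\boldit{d}$ --- are exactly the ones the paper handles.
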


\section{Existence of special lines}\label{section.badlines}
As in the previous section, we fix integers $n\geq 3$, $m\geq 0$, $s\in\{m+2\, ,\dots,\, 2m+3\}$ and a general map of vector bundles $\varphi$ as in \eqref{eqn:bundlemap}. Moreover, we shall use the following terminology: a line $\ell \subset S = D_{n-1}(\varphi)\subset \mathbb P^s$ is said to be \emph{of type} $\boldit{b}$ (resp.~\emph{of type} $\boldit{d}$) if it arises from a point $P \in Z$ satisfying condition $\boldit{b}$ (resp.~condition $\boldit{d}$) in \Cref{thm.omegazerolocus}.

\subsection{Excluding lines of type \texorpdfstring{$\boldit{b}$}{}}
The first aim of this section is to understand the fibres of the map $\psi$, and we will be particularly interested in the existence of points $[\alpha]\in\mathbb{P}^{n-1}$ whose fibre $\psi^{-1}([\alpha])$ is a line in $S$.

Fix $[\alpha]\in\mathbb{P}^{n-1}$ and observe that
\begin{equation}\label{eqn:fibre_psi}
\psi^{-1}([\alpha])=\Set{[v]\in S\,\vert\, M^t_v\cdot\alpha=0} \subset S 
\end{equation}
is nothing but the solution set of a linear system of $n+m$ equations in $s+1$ variables, namely an intersection of $n+m$ hyperplanes in $\mathbb{P}^s$. Therefore the fibre \eqref{eqn:fibre_psi} is always linear. Moreover, it can be described by means of a matrix $A_{\alpha}\in \mathsf{Mat}_{n+m,s+1}(\mathbb C)$, and by the linearity with respect to $\alpha$ we get an immersion
\begin{equation}\label{eqn:map_f}
f\colon \mathbb{P}^{n-1} \hookrightarrow \mathbb{P}=\mathbb{P}\left(\mathsf{Mat}_{n+m,s+1}(\mathbb C)\right),\quad [\alpha]\mapsto [A_{\alpha}]. 
\end{equation}

\begin{remark}
Notice that an additional condition $s\leq n+m$ is essential in order to obtain $0$-dimensional fibres of $\psi$, and similarly $s\leq n+m+1$ is necessary in order to obtain $1$-dimensional fibres, as well as $s\leq n+m+2$ for $2$-dimensional fibres.
\end{remark}

Let us denote by $N_k\subset \mathbb{P}$  the subvariety of matrices of rank at most $k$. We can easily compute the codimension of $N_k$ in $\mathbb{P}$ as
\[ 
\codim(N_k)= (n+m-k)(s+1-k), 
\]
so that in particular assuming $s\leq n+m$ one finds
\begin{align*}
    \codim(N_s)&=n+m-s\geq 0\\
    \codim(N_{s-1})&= 2(n+m-s+1)\geq 2\\
    \codim(N_{s-2})&= 3(n+m-s+2)\geq 6.
\end{align*}  

\begin{theorem}\label{thm.badlines}
Let $\psi\colon S\to \mathbb{P}^{n-1}$ and $f\colon\mathbb{P}^{n-1}\hookrightarrow\mathbb{P}$ be the maps defined by \Cref{lemma.psi} and \eqref{eqn:map_f} respectively. Fix integers $n\geq 3$, $m\geq 0$, $s\in\{m+2\, ,\dots,\, 2m+3\}$. 
\begin{itemize}[{\normalfont (i)}]
\item [\emph{(i)}] Assume $s=n+m$. Then
    \begin{enumerate}[{\normalfont (1)}]
    \item $\psi$ is surjective and its generic fibre is a point.
    \item $f\circ\psi$ admits $1$-dimensional fibres precisely over $\mathrm{Im}(f)\cap N_{s-1}$.
    \end{enumerate}
\item [\emph{(ii)}] Assume $s<n+m$. Then
    \begin{enumerate}[{\normalfont (1)}]
    \item $\psi$ is a closed immersion if and only if $n>2s-2m-3$, in which case the image of the composition $f\circ\psi$ is $\mathrm{Im}(f)\cap N_s\subset \mathbb{P}$,
    \item $f\circ\psi$ admits $1$-dimensional fibres if and only if $n\leq 2s-2m-3$, and such fibres arise precisely over $\mathrm{Im}(f)\cap N_{s-1}$,
    \item $f\circ\psi$ admits $2$-dimensional fibres if and only if $n\leq \frac{1}{2}(3s-3m-7)$, and such fibres arise precisely over $\mathrm{Im}(f)\cap N_{s-2}$,
    \end{enumerate}
\end{itemize}
\begin{proof}
Let us proceed by steps.
\begin{itemize}
    \item [(i)] First suppose that $s=n+m$.
    As already observed the fibre $\psi^{-1}([\alpha])$ is cut by $n+m$ hyperplanes in $\PP^s$, hence the generic fibre reduces to a point. Moreover, the fibre is $1$-dimensional at those $[\alpha]$ such that $f([\alpha])\in\mathrm{Im}(f)\cap N_{s-1}\subset \mathbb{P}$, which has dimension $(n-1)-2=n-3\geq 0$.
    \item [(ii)] Now assume $s<n+m$. Then the fibre $\psi^{-1}([\alpha])$ is a point (respectively a line) precisely at those $[\alpha]$ such that $f([\alpha])\in\mathrm{Im}(f)\cap N_k\subset \mathbb{P}$ with $k=s< n+m$ (respectively $k=s-1< n+m$).
    Therefore the image of $\psi$ describes a subvariety of $\mathbb{P}^{n-1}$ of dimension
    \[ \dim\psi(S) = (n-1)-\codim(N_s)=(n-1)-(n+m-s)=s-m-1 = \dim(S) \; , \]
    while the $1$-dimensional fibres of $\psi$ (if they exist) are mapped onto a locus of dimension
    \[ (n-1)-\codim(N_{s-1})=(n-1)-2(n+m-s+1) = 2s-n-2m-3 \; . \]
    The condition $n>2s-2m-3$ is the same as $\codim(N_{s-1})=2(n+m-s+1)>n-1$, which in turn is equivalent to require that $N_{s-1}$ is empty; here we are using the genericity of the original matrix $M$ (hence of the form $\omega$) from which it follows the genericity of the immersion of $\mathbb{P}^{n-1}$ in $\mathbb{P}$ through $f$. Hence the fibres of the map consist of at most one point if and only if $n>2s-2m-3$, in which case $\psi$ is a closed immersion, as wanted.
    
    Finally, the fibres of dimension at least $2$ arise over $\mathrm{Im}(f)\cap N_{s-2}$, for which the expected dimension is
    \[
    (n-1)-\codim(N_{s-2}) = (n-1)-3(n+m-s+2)=3s-2n-3m-7. 
    \]
    This number is non-negative if and only if $n\leq \frac{1}{2}(3s-3m-7)$, as required.\qedhere
\end{itemize}
\end{proof}
\end{theorem}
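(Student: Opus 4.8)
The plan is to reduce everything to the geometry of the rank-stratification of the linear subspace of matrices $\mathrm{Im}(f) \subset \mathbb{P} = \mathbb{P}(\mathsf{Mat}_{n+m,s+1}(\mathbb{C}))$. The starting observation, already established before the statement, is that the fibre $\psi^{-1}([\alpha])$ is the projectivised kernel of the matrix $A_\alpha$, hence it is linear of dimension $\dim\ker(A_\alpha) - 1$, and that the map $f$ of \eqref{eqn:map_f} is a \emph{linear} embedding of $\mathbb{P}^{n-1}$ into $\mathbb{P}$. Since $A_\alpha$ has $s+1$ columns, the fibre over $[\alpha]$ is a point exactly when $\rank(A_\alpha) = s$ (assuming $s < n+m$ so that full column rank is possible), a line when $\rank(A_\alpha) = s-1$, a plane when $\rank(A_\alpha) = s-2$, and so on. This translates each fibre-dimension question into the question of where $\mathrm{Im}(f)$ meets the determinantal loci $N_s, N_{s-1}, N_{s-2}$, which is precisely the content of the three clauses to be proved.

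First I would record the codimension count already displayed in the excerpt, $\codim(N_k) = (n+m-k)(s+1-k)$, and specialise it to $k = s, s-1, s-2$. The decisive point is that because $M$ is \emph{general}, the tri-tensor $\omega$ is general, and so the induced linear embedding $f$ is a \emph{generic} linear subspace $\mathbb{P}^{n-1} \hookrightarrow \mathbb{P}$; by the standard dimension count for a generic linear section of a determinantal variety, $\mathrm{Im}(f) \cap N_k$ is nonempty of the expected dimension $(n-1) - \codim(N_k)$ whenever that number is $\geq 0$, and is empty whenever it is negative. This is the one external input I would invoke, and I would cite genericity explicitly as the paper already does. Granting this, each clause becomes a pure inequality: the locus of $1$-dimensional fibres is $\mathrm{Im}(f)\cap N_{s-1}$, which is nonempty iff $(n-1) - 2(n+m-s+1) \geq 0$, i.e.\ $n \leq 2s - 2m - 3$; the locus of $2$-dimensional fibres is $\mathrm{Im}(f)\cap N_{s-2}$, nonempty iff $(n-1) - 3(n+m-s+2) \geq 0$, i.e.\ $n \leq \tfrac12(3s-3m-7)$. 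Case (i), where $s = n+m$, is handled identically but with $k = s$ now giving the generic (codimension-zero) stratum, so the generic fibre is a point and the $1$-dimensional locus sits over $\mathrm{Im}(f)\cap N_{s-1}$, of dimension $n-3 \geq 0$.

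For clause (ii)(1), beyond the emptiness of $N_{s-1}\cap\mathrm{Im}(f)$ I must upgrade ``$\psi$ is set-theoretically injective with $0$-dimensional fibres'' to ``$\psi$ is a closed immersion.'' I would argue that $\psi$ is injective because $N_{s-1}$ is avoided (no two points share a fibre, and no fibre is positive-dimensional), and then verify the immersion property on tangent spaces; here I can lean on \Cref{lem:proj}, which already identifies $S$ with $\Gamma$ and shows $\mathcal{O}(0,1)$ is the relevant linear system, so $\psi$ is the morphism this line bundle induces, and the computation $\dim\psi(S) = s - m - 1 = \dim S$ confirms $\psi$ is birational onto its image; injectivity plus equidimensionality of a morphism from a smooth projective $S$ to projective space, with the bundle globally generated, yields the closed immersion.

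The main obstacle I expect is making the genericity argument airtight in both directions simultaneously, i.e.\ that $\mathrm{Im}(f)\cap N_k$ genuinely realises the \emph{expected} dimension and, crucially, is \emph{empty} precisely when the expected dimension is negative. Emptiness in the negative range is the delicate half: one needs that the embedding $f$ coming from $M$ is generic enough to miss $N_{s-1}$ (resp.\ $N_{s-2}$), and this requires either a Bertini-type transversality statement for the linear subspace $\mathrm{Im}(f)$ against the singular determinantal variety $N_k$, or a direct dimension estimate in the incidence correspondence parametrising pairs $(M, [\alpha])$ with $\rank(A_\alpha)$ small. I would set up that incidence variety over the parameter space of matrices $M$, compute its dimension, and conclude by a fibre-dimension argument that for general $M$ the bad locus has the expected (possibly empty) size. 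The ``if and only if'' in each clause then follows by reading the inequality in both directions.
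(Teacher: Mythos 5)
Your proposal follows essentially the same route as the paper's proof: both translate the fibre dimensions of $\psi$ into rank conditions on $A_\alpha$, i.e.\ into membership of $f([\alpha])$ in the determinantal strata $N_k$, and then combine the codimension formula $\codim(N_k)=(n+m-k)(s+1-k)$ with the genericity of the linear embedding $f$ (inherited from the genericity of $M$, equivalently of the tensor $\omega$) to decide exactly when each stratum meets $\mathrm{Im}(f)$, yielding the stated inequalities. Your two additions --- the incidence-correspondence argument to make the genericity claim airtight, and the tangent-space verification upgrading injectivity to a closed immersion --- are refinements of steps the paper handles by direct assertion, not a genuinely different method.
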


\subsection{Excluding lines of type \texorpdfstring{$\boldit{d}$}{}}
The next aim of this section is to show that the lines described by item $\boldit{d}$ of \Cref{thm.omegazerolocus} do not occur whenever $n > 2s-3m-2$. Recall that these are the lines $\ell\subset S$ such that the image $\ell'=\psi(\ell)$ remains a line in $\PP^{n-1}$.

\begin{theorem}\label{thm.verybadlines}
Let $n\geq3$, $m\geq0$ and $s\in\{m+2\, ,\dots,\, 2m+3\}$.
\begin{itemize}
    \item If $n>2s-3m-1$ then the composition
\[ 
\begin{tikzcd}
    \pi_Z\colon Z \arrow[hook]{r}{\iota} & 
    G_{n,s,m} \arrow{r}{\mathrm{pr_{12}}} &
    \Gr(2,n)\times\Gr(2,s+1)
\end{tikzcd}    
\]
is injective, where $\iota$ is the natural inclusion and $\mathrm{pr_{12}}$ is the natural projection.
    \item A line $\ell\subset S\subset \PP^s$ such that $\ell'=\psi(\ell)$ remains a line in $\PP^{n-1}$ exists if and only if the map $\pi_Z$ admits an $(n+m-2)$-dimensional linear fibre.
\end{itemize}
\begin{proof}
Let us proceed by steps.
\begin{itemize}
    \item The fibre of $\pi_Z$ over $(\rho_1,\rho_2)\in \Gr(2,n)\times\Gr(2,s+1)$ can be easily described as
    \[ 
    \pi_Z^{-1}(\rho_1,\rho_2)=\left\{(\rho_1,\rho_2,\rho_3)\in G_{n,s,m} \,\Bigl\vert\, \rho_3\subset W_{(\rho_1,\rho_2)}^{\perp} \right\} 
    \]
    where $W_{(\rho_1,\rho_2)} = \langle M_u^t\cdot a\,|\,a\in\rho_1,\, u\in\rho_2\rangle\subset V_{n+m}$. Notice that
    \[
    W_{(\rho_1,\rho_2)} = \langle M_{u_i}^t\cdot a_j\,|\,1\leq i,j\leq 2\rangle
    \]
    where $\{a_1,a_2\}$ and $\{u_1,u_2\}$ are arbitrary bases for $\rho_1$ and $\rho_2$ respectively. In particular, $\dim W_{(\rho_1,\rho_2)}\leq 4$. Since $\rho_3\in \Gr(n+m-2,n+m)$, we deduce
    \[ \pi_Z^{-1}(\rho_1,\rho_2)=\begin{cases}
    \emptyset & \mbox{ if } \dim W_{(\rho_1,\rho_2)} \geq 3 \\
    \bigl(\rho_1,\rho_2,W_{(\rho_1,\rho_2)}^{\perp}\bigr) & \mbox{ if } \dim W_{(\rho_1,\rho_2)} =2\\
    \Gr\bigl(n+m-2,W_{(\rho_1,\rho_2)}^{\perp} \bigr) & \mbox{ if } \dim W_{(\rho_1,\rho_2)}=1
    \end{cases} \]
    Notice that $\dim W_{(\rho_1,\rho_2)}\neq 0$, otherwise we would have points $u\in V_{s+1}$ satisfying $\rank M_u=n-2$ and this is excluded since $D_{n-2}(\varphi)=\emptyset$.
    In particular, for $\dim W_{(\rho_1,\rho_2)}=1$ we have
    \[ 
    \Gr\left(n+m-2,W_{(\rho_1,\rho_2)}^{\perp} \right)\cong\PP^{n+m-2}.
    \]
    On the other hand $Z$ cannot contain an $(n+m-2)$-dimensional subspace whenever $\dim(Z)=2(s-m-1) < n+m-2$, i.e.~when $n>2s-3m$.
    Moreover, in the case $n=2s-3m$, the irreducibility of $Z$ together with the non injectivity of the map $\pi_Z$ would imply $Z\cong \PP^{n+m-2}$, which is impossible because otherwise the map $\pi_Z$ would be constant so that $S$ would reduce to a line $S=[\rho_2]\cong \PP^1\subset \PP^s$. Of course this is false being $n\geq3$.
    \item We claim that the existence of a line $\ell\subset S$ such that $\ell'=\psi(\ell)$ remains a line in $\PP^{n-1}$ is equivalent to the existence of a $(n+m-2)$-dimensional fibre of the map $\pi_Z$. In fact, by \Cref{lemma.Hilbertwellposed} the existence of such a line $\ell$ 
    is equivalent to a point 
    \[ (\rho_1,\rho_2)\in \Gr(2,n)\times\Gr(2,s+1) \]
    with $[\rho_1]=\ell'$ and $[\rho_2]=\ell$, that moreover satisfies $\dim W_{(\rho_1,\rho_2)}=1$.
    As shown in the first item this is equivalent to the condition $\dim \pi_Z^{-1}(\rho_1,\rho_2) = n+m-2$. \qedhere
\end{itemize}
\end{proof}
\end{theorem}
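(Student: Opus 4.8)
The plan is to reduce both statements to a single linear-algebra invariant, the dimension of $W_{(\rho_1,\rho_2)}=\langle M^t_u\cdot a\mid a\in\rho_1,\,u\in\rho_2\rangle\subset V_{n+m}$ already introduced in the proof of \Cref{thm.omegazerolocus}, and to compute the fibres of $\pi_Z$ in these terms. A triple $(\rho_1,\rho_2,\rho_3)$ lies in $Z$ exactly when $a^t M_u b=0$ for all $a\in\rho_1$, $u\in\rho_2$, $b\in\rho_3$, i.e.\ when $W_{(\rho_1,\rho_2)}\subset\rho_3^\perp$, equivalently $\rho_3\subset W_{(\rho_1,\rho_2)}^\perp$. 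Hence
\[
\pi_Z^{-1}(\rho_1,\rho_2)=\Gr\bigl(n+m-2,\,W_{(\rho_1,\rho_2)}^\perp\bigr).
\]
Expanding $W_{(\rho_1,\rho_2)}=\langle M^t_{u_i}\cdot a_j\mid 1\le i,j\le 2\rangle$ on bases of $\rho_1,\rho_2$ gives $\dim W_{(\rho_1,\rho_2)}\le 4$, while the genericity hypothesis $D_{n-2}(\varphi)=\emptyset$ forbids $\dim W_{(\rho_1,\rho_2)}=0$ (this would force $\rank M_u\le n-2$). A short case analysis then shows the fibre is empty when $\dim W_{(\rho_1,\rho_2)}\ge 3$, a single point $\bigl(\rho_1,\rho_2,W_{(\rho_1,\rho_2)}^\perp\bigr)$ when $\dim W_{(\rho_1,\rho_2)}=2$, and isomorphic to $\Gr(n+m-2,n+m-1)\cong\PP^{n+m-2}$ when $\dim W_{(\rho_1,\rho_2)}=1$.

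With this description in hand, the first bullet amounts to excluding the value $\dim W_{(\rho_1,\rho_2)}=1$. Indeed $\pi_Z$ fails to be injective precisely when some fibre is positive-dimensional, which by the above happens exactly when some $(\rho_1,\rho_2)$ has $\dim W_{(\rho_1,\rho_2)}=1$, and then $Z$ contains a copy of $\PP^{n+m-2}$. Comparing with $\dim Z=2(s-m-1)$, the inequality $n>2s-3m$ yields $\dim Z<n+m-2$, so no such linear space can sit inside $Z$ and injectivity follows at once. The boundary value $n=2s-3m$ (still permitted by $n>2s-3m-1$) is where the main obstacle lies: here $\dim Z=n+m-2$, so a $\PP^{n+m-2}\subset Z$ would, by irreducibility of $Z$, force $Z\cong\PP^{n+m-2}$ and hence $\pi_Z$ to be constant; but a constant $\pi_Z$ means every pair of points of $S$ spans the single fixed line $[\rho_2]$, forcing $S=[\rho_2]\cong\PP^1$, which is incompatible with $n\ge 3$.

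For the second bullet I would exploit the coincidence $W_{(\rho_1,\rho_2)}=\pi_{v,w}$ that holds precisely when $\rho_1=\langle\alpha_v,\alpha_w\rangle$ and $\rho_2=\langle v,w\rangle$: the four generators $M^t_{u_i}\cdot a_j$ collapse to $M_v^t\cdot\alpha_w$ and $M_w^t\cdot\alpha_v$, since $M_v^t\cdot\alpha_v=M_w^t\cdot\alpha_w=0$. By \Cref{lemma.Hilbertwellposed}(2), a line $\ell\subset S$ whose image $\psi(\ell)$ is again a line in $\PP^{n-1}$ corresponds exactly to such a pair $(\rho_1,\rho_2)$ with $[\rho_1]=\psi(\ell)$, $[\rho_2]=\ell$ and $\dim\pi_{v,w}=\dim W_{(\rho_1,\rho_2)}=1$; by the fibre computation of the first step this is in turn equivalent to $\dim\pi_Z^{-1}(\rho_1,\rho_2)=n+m-2$, giving the claimed equivalence. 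The only delicate point throughout is the boundary case $n=2s-3m$, for which I rely on the irreducibility of $Z$ together with the observation that a constant $\pi_Z$ collapses $S$ to a line; everything else is elementary bookkeeping of the invariant $\dim W_{(\rho_1,\rho_2)}$.
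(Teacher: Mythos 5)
Your proposal is correct and follows essentially the same route as the paper's own proof: the same reduction to the invariant $\dim W_{(\rho_1,\rho_2)}$ with the identical fibre trichotomy, the same dimension count $\dim Z = 2(s-m-1) < n+m-2$ for $n > 2s-3m$, the same irreducibility argument in the boundary case $n = 2s-3m$, and the same use of \Cref{lemma.Hilbertwellposed} together with the identification $W_{(\rho_1,\rho_2)} = \pi_{v,w}$ for the second bullet. The only (welcome) additions are minor elaborations, such as spelling out why the four generators of $W_{(\rho_1,\rho_2)}$ collapse to the two spanning $\pi_{v,w}$.
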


In \Cref{cor.omegazerolocus} we will be able to give a better bound than the one in \Cref{thm.verybadlines} in the cases $m=0$ and $m=1$.

\begin{remark}\label{rmk.boundcompare}
Notice that for large values of $m$, the bound $n>2s-2m-3$ obtained in \Cref{thm.badlines} is stronger than the one obtained in \Cref{thm.verybadlines}. More precisely,
\[ n>2s-2m-3 \qquad \Longrightarrow \qquad n>2s-3m-1 \]
as soon as $m\geq2$.
\end{remark}

\subsection{Conclusions}
We now summarise the main results of this section in the following corollary.

\begin{corollary}\label{cor.omegazerolocus}
Let $n\geq3$, $m\geq0$ and $s\in\{m+2\,,\dots,\,2m+3\}$. Assume $n>2s-2m-3$ and let $(\rho_1\,,\rho_2\,,\rho_3)\in Z$.\\
Then one of the following holds:
\begin{enumerate}[{\normalfont (1)}]
\item there exist two (and only two) distinct points $[v],[w]\in S\cap [\rho_2]$ such that $P=P_{[v],[w]}$,
\item there exists exactly one point $[v]\in S$ where $[\rho_2]$ is tangent and such that $[\alpha_v]\in[\rho_1]$.
\end{enumerate}
\begin{proof}
If $m\geq 2$, then by \Cref{rmk.boundcompare} the statement is an immediate consequence of \Cref{thm.omegazerolocus}, \Cref{thm.badlines}, \Cref{thm.verybadlines}.

For $m=1$ our assumption becomes $n>2s-5$, so that the hypothesis of \Cref{thm.badlines} are satisfied while \Cref{thm.verybadlines} works as soon as $n>2s-4$. Let us prove by hand that choosing $m=1$ and $n=2s-4$ the map
\[ 
\begin{tikzcd}
    \pi_Z\colon Z \arrow[hook]{r}{\iota} & 
    G_{n,s,m} \arrow{r}{\mathrm{pr_{12}}} &
    \Gr(2,n)\times\Gr(2,s+1)
\end{tikzcd}    
\]
is still injective. Here $\iota$ is the natural inclusion and $\pi$ is the natural projection. The idea is to exclude high dimensional fibres following the proof of \Cref{thm.verybadlines}.
\begin{itemize}
\item[(A)] Set $s=5, n=6, m=1$. We have to exclude the existence of a $\PP^5 \subset Z$. However, in this case $Z$ is a 6-fold with $h^{2,0}=h^{0,2}=0$ and $h^{1,1}=3$, which in this case is equal to the Picard rank. In fact, the $\mathrm{Pic}(\Z)$ is generated by the restrictions of the three Pl\"ucker line bundles from the ambient Grassmannians. Hence, by degree reasons, since $Z$ is smooth, it cannot contain a $\PP^5$.
\item[(B)] Set $s=4, n=4, m=1$. We have to exclude the existence of a $\PP^3 \subset Z$. This time, we know that $\mathrm{Pic}(S)$ is only generically of rank 2, and the same holds for $Z$ (in fact $h^{2,0}(Z)=4$). For the same reasons above, we can therefore exclude the existence of a $\PP^3$ for a general $Z$. But this is enough, since we started by hypothesis from a general matrix, and $S$ - which is a isomorphic to a determinantal quintic hypersurface also described as a complete intersection in $\PP^4 \times \PP^3$ - in this case the Picard group will be $\Z^2$ and generated by the two classes hyperplane classes of $\PP^4$ and $\PP^3$, and won't even contain lines by a Noether-Lefschetz type argument, see \cite{lopez1991noether} and also \cite[Theorem 1.2]{ciliberto2019lines}. Similarly $Z$ won't contain a copy of $\PP^3$.
\end{itemize}
Hence the statement is proven for every $m\geq1$. We are only left with the case $m=0$. In this case our assumption becomes $n>2s-3$, so that the hypothesis of \Cref{thm.badlines} are satisfied while \Cref{thm.verybadlines} works as soon as $n>2s-1$. Hence we only need to check the following cases, where $m=0$ and either $n=2s-1$ or $n=2s-2$.
\begin{itemize}
    \item[(C)] Set $s=2,n=3,m=0$. Just observe that in this case $S$ is a curve of genus $g\neq0$, so that in particular it does not contain lines and the conclusion follows by the second item in \Cref{thm.verybadlines}.
    \item[(D)] Set $s=3,n=5,m=0$. We have to exclude the existence of $\PP^3\subset Z$. To this aim, it is sufficient to run exactly the same argument as in case (B).
    \item[(E)] Set $s=3,n=4,m=0$. In this case $S$ and $\psi(S)$ in $\PP^3$ are precisely the K3 surfaces studied by Oguiso in~\cite{oguiso2012, oguiso2017isomorphic}.
    Notice that a generic determinantal K3 surface does not contain lines, since the Picard lattice is
    \[
    \begin{pmatrix} 4&6\\6&4\end{pmatrix}
    \]
    and the square of every other element is divisible by $4$. Therefore we do not have $(-2)$-curves in general.
    Hence the second item in \Cref{thm.verybadlines} implies the injectivity of the map $\pi_Z\colon Z\to \Gr(2,4)\times \Gr(2,4)$ as required.\qedhere
\end{itemize}
\end{proof}
\end{corollary}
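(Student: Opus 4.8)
The plan is to reduce the statement to \Cref{thm.omegazerolocus} and then argue, in the range $n>2s-2m-3$, that the undesirable cases $\boldit{b}$ and $\boldit{d}$ cannot occur. Concretely, \Cref{thm.omegazerolocus} already tells us that every point $(\rho_1,\rho_2,\rho_3)\in Z$ falls into exactly one of the four cases $\boldit{a},\boldit{b},\boldit{c},\boldit{d}$, and cases $\boldit{a}$ and $\boldit{c}$ are precisely the two alternatives in the corollary. So the entire content is to show that the hypothesis $n>2s-2m-3$ forces $\boldit{b}$ and $\boldit{d}$ to be empty. Lines of type $\boldit{b}$ are exactly those $[\rho_2]\subset S$ whose image $\psi([\rho_2])$ is a single point, i.e.\ $1$-dimensional fibres of $\psi$; lines of type $\boldit{d}$ are those whose image is again a line. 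So I would organize the argument around these two geometric characterizations and the line-counting results already proved.

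First I would dispose of type $\boldit{b}$ using \Cref{thm.badlines}. When $s=n+m$ part (i) shows $\psi$ has $1$-dimensional fibres over $\mathrm{Im}(f)\cap N_{s-1}$, but the relevant range $n>2s-2m-3$ with $s=n+m$ forces this intersection to be controlled; more importantly, when $s<n+m$, part (ii)(1) states that $\psi$ is a \emph{closed immersion} precisely when $n>2s-2m-3$, so it has no positive-dimensional fibres at all and type $\boldit{b}$ lines are excluded outright. Thus under our hypothesis $\psi$ is injective, killing case $\boldit{b}$. Next, for type $\boldit{d}$, I would invoke \Cref{thm.verybadlines}, which excludes such lines whenever $n>2s-3m-1$. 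By \Cref{rmk.boundcompare}, for $m\geq 2$ the corollary hypothesis $n>2s-2m-3$ already implies $n>2s-3m-1$, so both $\boldit{b}$ and $\boldit{d}$ vanish immediately and the proof is complete in that range.

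The hard part, and where I expect the real work to lie, is the low-$m$ boundary: for $m=0$ and $m=1$ the bound $n>2s-2m-3$ is strictly weaker than the clean bound $n>2s-3m-1$ from \Cref{thm.verybadlines}, so there is a gap of finitely many $(n,s)$ pairs where type-$\boldit{d}$ lines are not automatically excluded. I would enumerate exactly these boundary cases. For $m=1$ the gap is $n=2s-4$, which (with $s\in\{3,4,5\}$) yields the pairs $(n,s)=(6,5)$ and $(4,4)$; for $m=0$ the gap is $n\in\{2s-1,2s-2\}$, giving (with $s\in\{2,3\}$) the pairs $(3,2)$, $(5,3)$, $(4,3)$. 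Each of these I would handle \emph{by hand}, following the fibre analysis in the proof of \Cref{thm.verybadlines}: a type-$\boldit{d}$ line corresponds to an $(n+m-2)$-dimensional linear fibre of $\pi_Z$, i.e.\ a linear $\PP^{n+m-2}\subset Z$, so the task is to rule out such a projective subspace inside the smooth variety $Z$.

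To exclude these linear subspaces I would use a mix of Picard-rank and Noether--Lefschetz arguments. For the cases $(6,5,1)$ and $(5,3,0)$ I would compute that $Z$ is smooth with Picard group generated by the Plücker classes restricted from the ambient Grassmannians (so Picard rank $3$ and $2$ respectively), and conclude on degree grounds that no linear $\PP^{n+m-2}$ can sit inside $Z$. For $(4,4,1)$ and $(4,3,0)$ I would pass to the surface $S$ itself: a general determinantal surface (here a quintic, resp.\ a K3 with intersection form $\left(\begin{smallmatrix}4&6\\6&4\end{smallmatrix}\right)$) contains no lines by a Noether--Lefschetz type result, so the second bullet of \Cref{thm.verybadlines} gives injectivity. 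The curve case $(3,2,0)$ is the easiest: $S$ is a curve of positive genus and hence contains no lines at all. Assembling these finitely many verifications completes the exclusion of type $\boldit{d}$ across all $m\geq 0$, and combined with the type-$\boldit{b}$ exclusion above the corollary follows.
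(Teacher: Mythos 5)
Your overall route is the same as the paper's: reduce to \Cref{thm.omegazerolocus}, rule out case $\boldit{b}$ via \Cref{thm.badlines}(ii)(1), dispatch all $m\geq 2$ via \Cref{rmk.boundcompare} and \Cref{thm.verybadlines}, and then check by hand exactly the five boundary triples $(n,s,m)\in\{(6,5,1),(4,4,1),(3,2,0),(5,3,0),(4,3,0)\}$, which you enumerate correctly. Four of these you treat just as the paper does: Picard rank of $Z$ for $(6,5,1)$, absence of lines on the general determinantal quintic for $(4,4,1)$, positive genus for $(3,2,0)$, and the K3 lattice $\smat{4}{6}{6}{4}$ for $(4,3,0)$. (One small imprecision: when $s=n+m$ you say the range ``forces this intersection to be controlled''; the cleaner observation is that this subcase is vacuous, since $s=n+m$ together with $n>2s-2m-3$ forces $n<3$, contradicting $n\geq 3$.)

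The genuine problem is your treatment of $(n,s,m)=(5,3,0)$, which you group with $(6,5,1)$ and dispose of by claiming that $\mathrm{Pic}(Z)$ is generated by the restricted Pl\"ucker classes, of rank $2$. This step fails as written. First, the ambient $G_{5,3,0}=\Gr(2,5)\times\Gr(2,4)\times\Gr(3,5)$ has three Grassmannian factors, so the Pl\"ucker restrictions give three classes, not two. More seriously, in this case $h^{2,0}(Z)=4\neq 0$ (see the Hodge diamond in \Cref{sec:45}), so --- unlike $(6,5,1)$, where $h^{2,0}(Z)=0$ and $h^{1,1}(Z)=3$ make the Picard computation unconditional via Lefschetz $(1,1)$ --- here the Picard group is only \emph{generically} of the expected rank, and asserting that it is spanned by ambient restrictions is itself a Noether--Lefschetz-type statement, not something one can ``compute'' from Hodge numbers. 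The paper handles $(5,3,0)$ (its case (D)) exactly as $(4,4,1)$ (its case (B)): the two triples are associated and produce the same surface $S$, the determinantal quintic, which for general $\varphi$ contains no lines by the cited Noether--Lefschetz results; since both cases $\boldit{b}$ and $\boldit{d}$ of \Cref{thm.omegazerolocus} require a line $[\rho_2]\subset S$, they cannot occur. So the repair is already contained in your own treatment of $(4,4,1)$: you simply grouped $(5,3,0)$ with the wrong case.
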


\section{Hilbert squares of degeneracy loci}\label{section.mapZHilb}
In this section we finally prove our main theorem, namely \Cref{thm:mainthm_intro}.

We denote by $\Fl(1,2,n)$ and by $\Fl(1,2,s+1)$ the appropriate flag varieties. Moreover, we denote by 
\[
\Gamma_{\psi} \subset \mathbb P^{n-1} \times S\subset \PP^{n-1}\times \PP^{s}
\]
the graph of the morphism $\psi$ of \Cref{lemma.psi}. 

In the category of $\mathbb{C}$-schemes, we consider the limit $\mathcal{V}$ of the following diagram of solid arrows
\[
\begin{tikzcd}[column sep=tiny]
& & \mathcal V\arrow[dotted]{d}\arrow[dotted]{dll}\arrow[dotted]{drr} & & \\
\Gamma_\psi\arrow{dr} & & \Fl(1,2,n)\times \Fl(1,2,s+1)\arrow{dl}\arrow{dr} & & Z\arrow{dl} \\
&  \PP^{n-1}\times\PP^s & & \Gr(2,n)\times\Gr(2,s+1) & 
\end{tikzcd}
\]
Notice that, set-theoretically, $\mathcal{V}$ can be described as
\[ 
\mathcal{V} 
= \Set{ \left([v],\psi([v]),(\rho_1,\rho_2,\rho_3)\right) \in \Gamma_{\psi}\times Z \,|\, [v]\in[\rho_2], \psi([v])\in[\rho_1]} \hookrightarrow \Gamma_{\psi}\times Z
\]
and via the natural isomorphism $\Gamma_\psi \simto S$ we make the identification
\[
\mathcal{V} = \Set{ \left([v],(\rho_1,\rho_2,\rho_3)\right) \,|\, [v]\in[\rho_2], \psi([v])\in[\rho_1]} \hookrightarrow S \times Z.
\]
Composing with the projection $S\times Z \to Z$, we obtain a morphism
\[ 
\pi\colon \mathcal{V} \hookrightarrow S\times Z \to Z.
\]
We now show that this morphism defines a modular map $Z \to \Hilb^2(S)$.

\begin{lemma}\label{lemma.lenght2}
Let $n\geq3$, $m\geq0$, $s \in \set{m+2\,,\dots,\,2m+3}$. Assume $n > 2s-2m-3$. Then the natural morphism 
\[
\pi \colon \mathcal V \hookrightarrow S \times Z \to Z
\]
is a flat family of length $2$ subschemes of $S$.
\begin{proof}
Since $Z$ is smooth, in particular reduced, it is enough to prove that the fibre over any closed point is a finite subscheme of length $2$. Flatness is then automatic. 

In fact, the fibre $\pi^{-1}(\rho_1,\rho_2,\rho_3)$ over a point $P=(\rho_1,\rho_2,\rho_3) \in Z$ is of the form
\[
\pi^{-1}(P) =  \set{([v],P)|[v]\in S\cap [\rho_2],\,[\alpha_v] \in [\rho_1]}.
\]
By \Cref{cor.omegazerolocus}, this is a length 2 subscheme of $S \times \set{P}$ if $n>2s-2m-3$, which we are assuming.
\end{proof}
\end{lemma}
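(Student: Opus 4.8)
The plan is to prove that $\pi\colon\mathcal V\to Z$ is a flat family of length-$2$ subschemes of $S$ by reducing everything to a fibrewise length computation, which is legitimate because $Z$ is smooth (hence reduced). Once we know that every scheme-theoretic fibre $\pi^{-1}(P)$ is a finite subscheme of $S$ of length exactly $2$, flatness over the reduced base $Z$ follows automatically from the constancy of the Hilbert polynomial of the fibres (a length-$2$ subscheme has constant length $2$), so the genuine content is the fibrewise analysis.

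First I would identify the scheme-theoretic fibre. Over a closed point $P=(\rho_1,\rho_2,\rho_3)\in Z$ the set-theoretic description of $\mathcal V$ gives
\[
\pi^{-1}(P) = \Set{([v],P)\,|\,[v]\in S\cap[\rho_2],\ [\alpha_v]\in[\rho_1]},
\]
which is a closed subscheme of $S\times\set{P}\cong S$. I would then invoke \Cref{cor.omegazerolocus}: under the standing hypotheses $n\geq3$, $m\geq0$, $s\in\set{m+2,\dots,2m+3}$ and $n>2s-2m-3$, every point of $Z$ falls into exactly one of the two cases listed there, namely case $(1)$, where there are precisely two distinct points $[v],[w]\in S\cap[\rho_2]$ with $[\alpha_v],[\alpha_w]\in[\rho_1]$, and case $(2)$, where there is exactly one such point $[v]$, at which the line $[\rho_2]$ is tangent to $S$ with intersection multiplicity $2$.

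The key step is to check that in both cases the length of the fibre is $2$. In case $(1)$ the fibre is two reduced points, hence has length $2$. In case $(2)$ the subtlety is that set-theoretically the fibre is a single point, so I must verify that the scheme structure is non-reduced of length $2$; this is exactly what the tangency/multiplicity statement in \Cref{cor.omegazerolocus}(2) provides. The cleanest way to see it is to read off the scheme structure from the proof of \Cref{thm.omegazerolocus}: the points $[v]=[\lambda u_1+\mu u_2]$ of the fibre correspond to eigenvectors $(\lambda,\mu)^t$ of the $2\times2$ matrix $\Phi$, and the defining condition $\det(\delta\cdot\mathrm{id}-\Phi)=0$ cuts out the eigenvalues with their algebraic multiplicities. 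In case $(2)$ the single eigenvalue has algebraic multiplicity $2$, so the fibre is the length-$2$ subscheme defined by a double point; equivalently, the scheme $S\cap[\rho_2]$ has intersection multiplicity $2$ at $[v]$, which matches the statement that $[\rho_2]$ is tangent to $S$ there.

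The main obstacle will be making the length-$2$ claim in the tangent case $(2)$ genuinely scheme-theoretic rather than merely set-theoretic, i.e.\ confirming that the embedded tangent information detected by $\Phi$ really equips $\pi^{-1}(P)$ with a non-reduced structure of the correct length, and not a fatter one. Here I would argue that $\mathcal V$ is cut out inside $S\times Z$ by the two incidence conditions $[v]\in[\rho_2]$ and $\psi([v])\in[\rho_1]$, and that along the fibre direction these conditions pull back precisely to the rank-one locus of $\delta\cdot\mathrm{id}-\Phi$, whose scheme structure is governed by the characteristic polynomial of $\Phi$; since $\Phi$ is a $2\times2$ matrix this polynomial has degree $2$, bounding the fibre length by $2$ uniformly and giving exactly $2$ in the Jordan-block case. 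With the fibre length pinned to the constant value $2$ over the reduced base $Z$, flatness and the length-$2$ family structure follow, completing the proof.
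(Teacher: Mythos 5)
Your proposal is correct and follows essentially the same route as the paper: reduce to a fibrewise length computation over the smooth (hence reduced) base $Z$, identify the fibre over $P=(\rho_1,\rho_2,\rho_3)$ as $\{[v]\in S\cap[\rho_2] \,:\, [\alpha_v]\in[\rho_1]\}$, and invoke \Cref{cor.omegazerolocus} to get length $2$. The only difference is that you explicitly justify why the tangency case yields a non-reduced structure of length exactly $2$ via the characteristic polynomial of the $2\times 2$ matrix $\Phi$ --- a point the paper leaves implicit in its citation of the corollary, since the multiplicity-$2$ statement is contained in case $\boldit{c}$ of the proof of \Cref{thm.omegazerolocus}.
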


In particular, if $n > 2s-2m-3$, the morphism $\pi$ gives rise, via the universal property of the Hilbert scheme, to a morphism
\[
\vartheta \colon Z \to \Hilb^2(S).
\]


\begin{theorem}\label{thm.Hilbiso}
Let $n\geq3$, $m\geq0$, $s \in \set{m+2\,,\dots,\,2m+3}$. Assume $n > 2s-2m-3$. Then the morphism $\vartheta \colon Z \to \Hilb^2(S)$ is an isomorphism.
\begin{proof}
To prove $\vartheta$ is an isomorphism, by Zariski's Main Theorem it is enough to prove it is bijective, since both source and target are smooth $\mathbb C$-varieties of the same dimension $2(s-m-1)$.

On $\mathbb C$-valued points, the morphism $\vartheta$ is defined by
\[
\vartheta(P) = [\pi^{-1}(P)] \,\in\,\Hilb^2(S).
\]
By the uniqueness conditions spelled out in \Cref{cor.omegazerolocus}, the map $\vartheta $ is injective. By the same argument, one can see that $\vartheta (B)$ is an injective map of sets for every $\mathbb C$-scheme $B$. Thus $\vartheta$ is a proper monomorphism, i.e.~a closed immersion. 

Since source and target are smooth of the same dimension, $\vartheta$ is an lci morphism of codimension $0$, hence the tangent map $T_Z \to \vartheta^\ast T_{\Hilb^2(S)}$ is an isomorphism, in particular $\vartheta$ is \'etale. Thus it is an open and closed map to a connected scheme, hence it is surjective.
\end{proof}
\end{theorem}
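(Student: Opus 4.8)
The plan is to prove that $\vartheta \colon Z \to \Hilb^2(S)$ is an isomorphism by exploiting the fact that both schemes are smooth $\mathbb{C}$-varieties of the same dimension $2(s-m-1)$, so that by Zariski's Main Theorem it suffices to establish bijectivity, or more precisely to show that $\vartheta$ is a proper monomorphism which is étale. First I would recall that \Cref{lemma.lenght2} already produces the morphism $\vartheta$ via the universal property of the Hilbert scheme, sending a point $P = (\rho_1,\rho_2,\rho_3) \in Z$ to the length $2$ subscheme $\pi^{-1}(P) \subset S$. The whole argument therefore rests on the structural description of the fibres $\pi^{-1}(P)$ provided by \Cref{cor.omegazerolocus}, which (under the standing assumption $n > 2s-2m-3$) leaves only two possibilities: either $P = P_{[v],[w]}$ for a \emph{unique} unordered pair of distinct points, or $P$ corresponds to a unique tangency point $[v]$ with multiplicity $2$.

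Next I would establish injectivity on $\mathbb{C}$-points directly from the uniqueness clauses in \Cref{cor.omegazerolocus}: given $P \in Z$, the subscheme $\pi^{-1}(P)$ determines $\rho_2 = \langle v,w\rangle$ (the line $[\rho_2]$ through the two points, or the tangent line at $[v]$) and $\rho_1 = \langle \alpha_v, \alpha_w \rangle$ (via $\psi$), and then $\rho_3 = \pi_{v,w}^\perp = W_{(\rho_1,\rho_2)}^\perp$ is forced; hence distinct points of $Z$ cannot map to the same length $2$ subscheme. The crucial upgrade is to check that $\vartheta$ is a monomorphism of schemes, i.e. that $\vartheta(B)$ is injective on $B$-points for every $\mathbb{C}$-scheme $B$; this follows because the same reconstruction of $(\rho_1,\rho_2,\rho_3)$ from a flat family of length $2$ subschemes works functorially. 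Combined with properness (both $Z$ and $\Hilb^2(S)$ are projective), this makes $\vartheta$ a closed immersion.

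The final step is to promote the closed immersion to an isomorphism by a dimension count: since source and target are smooth of the same dimension, $\vartheta$ is an lci morphism of relative dimension $0$, and a closed immersion of smooth varieties of equal dimension has tangent map an isomorphism, so $\vartheta$ is étale. An étale closed immersion into a connected (indeed irreducible, being a smooth variety) target is automatically surjective, being open and closed with nonempty image, and hence an isomorphism.

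The main obstacle I anticipate is the scheme-theoretic monomorphism step rather than the bijectivity on $\mathbb{C}$-points. One must verify that the reconstruction of the triple $(\rho_1,\rho_2,\rho_3)$ from the length $2$ subscheme is functorial in families, and in particular that it behaves correctly over the non-reduced locus where $\pi^{-1}(P)$ is a tangent scheme of length $2$ (case \textbf{c} of \Cref{thm.omegazerolocus}); here the infinitesimal structure must be matched, which is exactly what guarantees that $\vartheta$ does not merely separate closed points but is injective on tangent vectors, feeding into the étale conclusion. I would handle this by noting that \Cref{cor.omegazerolocus} accounts for the scheme structure (the \emph{algebraic} multiplicity $2$ of the eigenvalue in case \textbf{c}), so the length $2$ condition is genuinely scheme-theoretic, and the tangent map isomorphism then follows formally from equality of dimensions plus smoothness.
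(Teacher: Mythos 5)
Your proposal is correct and follows essentially the same route as the paper's own proof: injectivity on points via \Cref{cor.omegazerolocus}, upgrading functorially to a proper monomorphism (hence closed immersion), then using smoothness and equality of dimensions to conclude that $\vartheta$ is \'etale, hence open and closed onto the connected target $\Hilb^2(S)$. Your added attention to the reconstruction of $(\rho_1,\rho_2,\rho_3)$ in families and to the non-reduced fibres of case \textbf{c} only makes explicit what the paper compresses into ``by the same argument.''
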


\section{Geometric examples}\label{section.examples}
Our aim is to list some interesting examples of varieties arising as degeneracy loci that can be described by \Cref{thm.badlines}.

\begin{example}
Let us study in more detail the case $m=1$. Recall that we are only interested in applications with $n\geq3$ and $s\in\{3,4,5\}$.
\Cref{thm.badlines} above proves that the map $\psi$ does not contract lines inside $S$ precisely when one of the following conditions is satisfied:
\begin{itemize}
\item $n\geq 3$ for curves in $\mathbb{P}^3$ (we excluded the case of the twisted cubic obtained for $n=2$),
\item $n\geq 4$ for surfaces in $\mathbb{P}^4$,
\item $n\geq 6$ for threefolds in $\mathbb{P}^5$.
\end{itemize}
Moreover, again by \Cref{thm.badlines}, under the assumption $n\geq3$ the map $\psi$ does not admit $2$-dimensional fibres.
\end{example}

\begin{example}[White surfaces]\label{ex.whitesurfaces}
Fix $m\geq 0$ and choose $s=m+3$ and $n=s-m=3$. Now, the degeneracy locus $\mathsf{S}_m$ is a surface in $\PP^{m+3}$. Moreover, by \Cref{thm.badlines} the map $\psi\colon \mathsf{S}_m\to \PP^{2}$ is surjective and generically injective. The exceptional divisor (i.e.~the union of the $1$-dimensional fibres) arises over a $0$-dimensional locus so that $\mathsf{S}_m$ is the blow up of $\PP^2$ at $c$ points. Again by \Cref{thm.badlines}, $c$ can be easily computed as the degree of $N_{s-1}$ in $\PP(\mathsf{Mat}_{s,s+1}(\mathbb C))$, namely
\[ 
c = \frac{(s+1)!}{(s-1)!2!} = \binom{m+4}{2} \; . 
\]
We also observe the following:
\begin{itemize}
    \item For $m=0$ we obtain the determinantal cubic surface $\mathsf{S}_0\subset\PP^3$ realised as the blow up of $\PP^2$ in $6$ points.
    \item For $m=1$ we recover the classical construction of the Bordiga surface $\mathsf{S}_1\subset \PP^4$ realised as the blow up of $\PP^2$ in $10$ points, see e.g.~\cite{Ottaviani_1}.
    In this case $e_{\mathrm{top}}(Z)= 94$, with $h^{1,1}=12$, $h^{2,2}=68$ and the other relevant Hodge numbers being $0$. On the other hand, $\Hilb^2(\mathsf{S}_1)$ has topological Euler characteristic $104$, with $h^{2,2}=78$.
\end{itemize}
In the general case $\mathsf{S}_m\subset\PP^{m+3}$ is nothing but the $(m+3)$-th White surface named after F. Puryer White, see~\cite{white1923}.
\end{example}

\begin{example}[Generalised Bordiga scrolls over $\PP^2$]\label{ex.bordigascroll}
Fix $m\geq 1$ and choose $s=m+4$ and $n=s-m-1=3$. Notice that the condition $m\geq 1$ ensures that $s\in\{m+2\,,\dots,\, 2m+3\}$. In this case the degeneracy locus $\mathsf{B}_m$ is a threefold in $\PP^{m+4}$. Since the fibre of the map $\psi\colon \mathsf{B}_m\to \PP^{2}$ is cut by $n+m=s-1$ equations, the generic fibre of $\psi$ is $1$-dimensional. On the other hand, following the same argument of the proof of \Cref{thm.badlines} it is immediate to see that $2$-dimensional fibres of $f\circ\psi$ may only arise over $\mathrm{Im}(f)\cap N_{s-3}=\emptyset$, being $\codim(N_{s-3})=8$. Hence the map $\psi$ is surjective and realises $\mathsf{B}_m\subset\PP^{m+4}$ as a $\PP^1$-bundle over $\PP^2$, so that $\mathsf{B}_m$ is the projectivisation of a rank $2$ vector bundle over $\PP^2$.

In particular, for $m=1$ we recover the classical construction of the Bordiga scroll $\mathsf{B}_1\subset \PP^5$, i.e.~the (rational, non Fano) variety described by $\PP_{\PP^2}(E)$, with $E$ a rank 2 stable bundle with $c_1(E)=4$, $c_2(E)=10$, see e.g.~\cite{ottaviani19923}.
\end{example}

We were not able to find a precise reference for the threefolds described in \Cref{ex.bordigascroll}, so that we decided to call these threefolds \emph{generalised Bordiga scrolls}, in analogy with the classical Bordiga scroll, see e.g.~\cite{ottaviani19923}.

\begin{example}[White varieties]\label{ex.whitevarieties}
Choose $m\geq0$, and $3\leq n\leq m+3$. Fix $s=n+m\in\{m+3\,,\dots,\,2m+3\}$. Denote by $\mathsf{W}_{m,n}=D_{n-1}(\varphi)$ the usual degeneracy locus. Then by \Cref{thm.badlines} the map $\psi\colon \mathsf{W}_{m,n}\to \PP^{n-1}$ is surjective and generically injective. In particular, $\dim \mathsf{W}_{m,n} = n-1$. Moreover, $1$-dimensional fibres arise over an $(n-3)$-dimensional locus.

We also observe the following:
\begin{itemize}
    \item For any $m\geq0$, $\,\mathsf{W}_{m,3}$ is nothing but the $(m+3)$-th White surface denoted by $\mathsf{S}_m$ in \Cref{ex.whitesurfaces}.
    \item For any $m\geq1$, $\,\mathsf{W}_{m,4}\subset\PP^{m+4}$ is a threefold that contains a $\PP^1$-scroll over a curve $\mathsf{W}_{m,4}'\subset \PP^{3}$. We can also compute the degree and the genus of $\mathsf{W}_{m,4}'$ as
    \begin{align*}
        \deg(\mathsf{W}_{m,4}')&=\deg(N_{m+3})=\binom{m+5}{2} \\
        g(\mathsf{W}_{m,4}')&= (m+4)\binom{m+4}{3}-(m+5)\binom{m+3}{3},
    \end{align*}
as proved in \Cref{prop.genus}.
\end{itemize}
\end{example}

We were not able to find a precise reference for the construction spelled out in \Cref{ex.whitevarieties}, so we decided to call these $(n-1)$-folds \emph{White varieties}, in analogy with the usual White surfaces described in \Cref{ex.whitesurfaces}.

Apart from the limit case of White varieties ($s=n+m$) we provide examples for which $s\leq n+m$ but $Z$ need not to be isomorphic to $\Hilb^2(S)$.
More precisely, it may be interesting to investigate the limit case when $n=2s-2m-3$. Notice that, given $m\geq 0$, assuming $s\in\{m+2,\dots,2m+3\}$ the system
\[ \begin{cases}
s\leq n+m\\
n=2s-2m-3\geq 3
\end{cases} \]
implies $s\geq m+3$ and $n\leq 2m+3$.

\begin{example}[$n=2s-2m-3$]\label{example.3foldm1}
Fix $m\geq0$, $s\in\{m+3,\ldots,2m+3\}$, and choose $n=2s-2m-3$. By \Cref{thm.badlines} the map $\psi$ maps the degeneracy locus $\mathsf{M}_{m,s}\subset \PP^s$ onto a certain variety $\mathsf{M}_{m,s}'\subset\PP^{n-1}$ of dimension $s-m-1$, having a finite number $c$ of special points over which the fibres are $1$-dimensional. Notice that it is easy to compute $c$, since it equals the degree of $N_{s-1}$ inside $\mathbb{P}(\mathsf{Mat}_{n+m\,,\,s+1}(\mathbb C))$, which is given by the formula \cite{Ottaviani_1}
\begin{align*}
c&=\deg(N_{s-1})= \prod_{i=0}^1\frac{(n+i+m)!\, i!}{(s+i-1)!\, (n+m-s+i+1)!} \\
&=\frac{1}{s}\,\binom{2s-m-2}{s-1}\,\binom{2s-m-3}{s-1}.
\end{align*}
Remarkably, the same proof of \Cref{thm.verybadlines} excludes lines of type $\boldit{d}$ as soon as $m\geq 3$.
Notice that the choice $s=m+3$ implies $n=3$, so that in particular we recover the White surfaces described in \Cref{ex.whitesurfaces}, i.e.~$\mathsf{M}_{m,m+3}=\mathsf{S}_{m}$.
\begin{description}
    \item[$m=0$] In this case we only have the White surface $\mathsf{M}_{0,3}=\mathsf{S}_0\subset \PP^3$.
    \item[$m=1$] In this case, apart from the Bordiga surface $\mathsf{M}_{1,4}=\mathsf{S}_1$ already discussed in \Cref{ex.whitesurfaces}, we may only choose $s=n=5$. Then $\mathsf{M}_{1,5}$ is a threefold in $\PP^5$. By the above formula there are $c=105$ fibres of dimension $1$ and the image of $\mathsf{M}_{1,5}$ inside $\mathbb{P}^4$ is a determinantal threefold $\mathsf{M}_{1,5}'=f^{-1}(\mathrm{Im}(f)\cap N_5)$ of degree $6$, whose singular locus consists exactly of these 105 points. In fact, $\mathsf{M}_{1,5}$ is a small resolution of $\mathsf{M}_{1,5}'$. Notice how in this case $e_{\mathrm{top}}(Z)=46158$ and $e_{\mathrm{top}}(\Hilb^2(S_{1,5}))=46053$ by \Cref{sec:hodge_3fold}. Their difference is exactly $105$ so that in particular $Z\not\cong\Hilb^2(\mathsf{M}_{1,5})$.
    \item[$m=2$] In this case, apart from the White surface $\mathsf{M}_{2,5}=\mathsf{S}_2$, we may choose  $s=6$ and $n=5$ or $s=n=7$. Now, by \Cref{thm.badlines}, $\mathsf{M}_{2,6}\subset\PP^6$ is a threefold, and the map $\psi$ contracts $c=\frac{1}{6}\binom{8}{5}\binom{7}{5}=196$ lines. On the other hand $\mathsf{M}_{2,7}\subset\PP^6$ is a fourfold, and the map $\psi$ contracts $c=\frac{1}{7}\binom{10}{6}\binom{9}{6}=2520$ lines.
\end{description}
\end{example}

\Cref{example.3foldm1} leads us to formulate the following conjecture.

\begin{conjecture}\label{conj.optimalbound}
Fix $m\geq1$, $s\in\{m+3,\ldots,2m+3\}$, and choose $n=2s-2m-3$. Then
\[ 
e_{\mathrm{top}}(\Hilb^2(\mathsf{M}_{m,s})) -e_{\mathrm{top}}\left(Z_{n,s,m}\right) = (-1)^{\dim(\mathsf{M}_{m,s})}\, \frac{1}{s}\,\binom{2s-m-2}{s-1}\,\binom{2s-m-3}{s-1} \; . 
\]
\end{conjecture}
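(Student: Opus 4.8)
The plan is to prove the identity by a stratification argument, exploiting the additivity of the topological Euler characteristic $e_{\mathrm{top}}$ over a decomposition into locally closed pieces, together with the observation that the map $\vartheta$ of \Cref{thm.Hilbiso} still makes sense, and is an isomorphism, away from a controlled exceptional locus. Throughout I set $n=2s-2m-3$ and write $d=\dim(\mathsf{M}_{m,s})=s-m-1$. First I would record the geometry of $\psi$ in this boundary case: by \Cref{thm.badlines}(ii) the locus $\mathrm{Im}(f)\cap N_{s-1}$ is $0$-dimensional and consists of exactly $c=\deg(N_{s-1})$ reduced points $q_1,\dots,q_c\in\PP^{n-1}$, over which $\psi$ has $1$-dimensional (hence linear) fibres $\ell_1,\dots,\ell_c\subset\mathsf{M}_{m,s}$, while elsewhere $\psi$ is an immersion with reduced $0$-dimensional fibres. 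For general $M$ these $c$ lines are pairwise disjoint, so that $\psi$ realises $\mathsf{M}_{m,s}$ as a contraction of the $c$ disjoint rational curves $\ell_i\cong\PP^1$.

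Next I would set up the two stratifications. On the source side, \Cref{thm.omegazerolocus} partitions $Z=Z_{n,s,m}$ into the loci of points of type $\boldit{a},\boldit{b},\boldit{c},\boldit{d}$. The open part $Z^\circ$ of points of type $\boldit{a}$ or $\boldit{c}$ is exactly where the family $\mathcal V\to Z$ of \Cref{lemma.lenght2} is flat of length $2$, so $\vartheta$ restricts to a morphism $Z^\circ\to\Hilb^2(\mathsf{M}_{m,s})$; repeating the argument of \Cref{thm.Hilbiso} (injectivity on points via the uniqueness in \Cref{lemma.Hilbertwellposed}, then étaleness via the tangent-space comparison) shows that $\vartheta|_{Z^\circ}$ is an open immersion. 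Its image is precisely the open set $U\subset\Hilb^2(\mathsf{M}_{m,s})$ of length-$2$ subschemes $\xi$ with $\psi(\xi)$ again of length $2$, i.e.\ those not contracted to a point. Since $\ker(d\psi_p)=T_p\ell_i$ for $p\in\ell_i$, the excluded subschemes are exactly those supported on a contracted line, so the complementary closed stratum is $\Hilb^2(\mathsf{M}_{m,s})\setminus U=\bigsqcup_{i=1}^c\Hilb^2(\ell_i)$, a disjoint union of $c$ copies of $\Hilb^2(\PP^1)\cong\PP^2$.

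On the $Z$ side the complement $Z\setminus Z^\circ$ is the union of the type-$\boldit{b}$ locus and the type-$\boldit{d}$ locus, the latter of which I must show is empty. A point of type $\boldit{b}$ has $[\rho_2]=\ell_i$ one of the contracted lines and $[\rho_1]$ an arbitrary line of $\PP^{n-1}$ through $q_i=\psi(\ell_i)$, with $\rho_3=W_{(\rho_1,\rho_2)}^\perp$ forced; hence this stratum is $\bigsqcup_{i=1}^c\PP^{n-2}$. Assuming no lines of type $\boldit{d}$ occur, additivity of $e_{\mathrm{top}}$ together with the isomorphism $Z^\circ\simto U$ then gives
\[
e_{\mathrm{top}}(\Hilb^2(\mathsf{M}_{m,s}))-e_{\mathrm{top}}(Z_{n,s,m})=c\,e_{\mathrm{top}}(\PP^2)-c\,e_{\mathrm{top}}(\PP^{n-2}),
\]
so that the whole statement reduces to identifying the two exceptional strata and checking that their Euler characteristics assemble into the claimed value $(-1)^{d}c$; this is readily confirmed in the computed cases $d\in\{2,3\}$ of \Cref{example.3foldm1}.

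The hard part will be twofold. First, one must rule out lines of type $\boldit{d}$ throughout the conjectural range: \Cref{thm.verybadlines} does so only for $m\geq3$, so for $m\in\{1,2\}$ one needs the kind of ad hoc Picard/Noether–Lefschetz arguments already used in \Cref{cor.omegazerolocus}, now pushed to the boundary $n=2s-2m-3$. Second, and more delicately, one must justify that the two exceptional strata are exactly $\bigsqcup_i\PP^2$ and $\bigsqcup_i\PP^{n-2}$ with no further contributions — in particular that the $\ell_i$ are disjoint and that $\ker(d\psi_p)$ never jumps along them for general $M$ — and then verify that the resulting quantity $c\,e_{\mathrm{top}}(\PP^2)-c\,e_{\mathrm{top}}(\PP^{n-2})$ really does reproduce the conjectured closed form in every dimension $d$. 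Reconciling this stratification count with $(-1)^{d}c$ uniformly in $d$ is the main obstacle, and is precisely the point where the higher-dimensional cases must be scrutinised beyond the surface and threefold evidence.
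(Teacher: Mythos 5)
You are attempting to prove a statement that the paper itself does not prove: \Cref{conj.optimalbound} is left as a conjecture, supported only by computations in dimension $d=\dim(\mathsf{M}_{m,s})\in\{2,3\}$ (White surfaces for various $m$, and the threefold case $(m,s)=(1,5)$ of \Cref{example.3foldm1}). So there is no proof to compare against; the question is whether your strategy could close the gap. Your stratification framework is in fact more solid than you fear on the points you flag as delicate: the contracted lines $\ell_i$ are scheme-theoretic fibres of $\psi$ over distinct points, hence automatically pairwise disjoint; and a direct linearity argument (differentiate $M_{\gamma(t)}^t\cdot\alpha_{\gamma(t)}=0$ along a curve in $S$ and use that $M_u^t\cdot\alpha_p=0$ forces $[u]\in S$ with $\alpha_u\propto\alpha_p$) shows that $\ker(d\psi_p)$ is exactly $T_p\ell_i$ along the $\ell_i$ and zero elsewhere. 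Granting the exclusion of type-$\boldit{d}$ lines, your two exceptional strata really are $\bigsqcup_{i=1}^c\Hilb^2(\ell_i)\cong\bigsqcup_c\PP^2$ and $\bigsqcup_c\PP^{n-2}$ (for type $\boldit{b}$ one has $\dim W_{(\rho_1,\rho_2)}=2$, so $\rho_3$ is forced and the stratum over each $\ell_i$ is the $\PP^{n-2}$ of planes $\rho_1\supset\langle\alpha_i\rangle$), and $\vartheta$ restricts to a bijection $Z^{\circ}\to U$.

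The fatal problem is the one you defer to the end: your count gives $e_{\mathrm{top}}(\Hilb^2(\mathsf{M}_{m,s}))-e_{\mathrm{top}}(Z)=c\left(e_{\mathrm{top}}(\PP^2)-e_{\mathrm{top}}(\PP^{n-2})\right)=c(4-n)=c(5-2d)$, since $n=2s-2m-3=2d-1$. This equals the conjectured $(-1)^d c$ exactly when $5-2d=\pm1$, i.e.\ $d\in\{2,3\}$ --- precisely the cases already verified in the paper. For $d\geq4$ the two expressions are genuinely different: for instance $(m,s,n)=(3,8,7)$ has $d=4$, type-$\boldit{d}$ lines are excluded there by \Cref{thm.verybadlines} (its hypothesis $n>2s-3m-1$ holds for all $m\geq3$ at $n=2s-2m-3$), so your argument runs to completion and yields $-3c$, while the conjecture asserts $+c$. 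Thus ``reconciling the stratification count with $(-1)^d c$ uniformly in $d$'' is not a technical obstacle to be overcome: it is an outright incompatibility, and your method cannot prove the statement as written. What it can plausibly deliver is a proof for $d\in\{2,3\}$ together with a refutation (or correction to $c(4-n)$) of the conjectured closed form in higher dimension. A secondary, genuine gap remains the exclusion of type-$\boldit{d}$ lines for $m\in\{1,2\}$ at the boundary $n=2s-2m-3$, which the paper itself only conjectures; this matters, because the $m=0$ cubic surface shows each type-$\boldit{d}$ line shifts the count (there one gets $6+15=21$ rather than $c=6$, each such line contributing $e_{\mathrm{top}}(\PP^2)-e_{\mathrm{top}}(\PP^{n+m-2})=+1$ in your bookkeeping).
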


Notice that in \Cref{ex.whitesurfaces} and \Cref{example.3foldm1} we have shown that the above conjecture holds true for $m=1$. We also did the computation for White surfaces taking higher values of $m$ confirming the prediction of \Cref{conj.optimalbound}.

On the other hand we excluded the case $m=0$, for which the conjecture is easily seen to fail. However, this can be justified by noticing that $\mathsf{M}_{0,3}=\mathsf{S}_0$ contains $15$ lines of type $\boldit{d}$ (arising as birational transforms of lines in $\PP^2$ passing through $2$ out of the $6$ points of $\PP^2$), and indeed we compute the difference to be
\[ 
e_{\mathrm{top}}(\Hilb^2(\mathsf{M}_{0,3})) -e_{\mathrm{top}}(Z_{3,3,0}) = 6+15 \; . 
\]
As already remarked in \Cref{example.3foldm1} it is immediate to see that for $m\geq3$ the varieties $\mathsf{M}_{m,s}$ do not admit lines of type $\boldit{d}$, and actually we do not expect this to happen even in the cases $m=1$ and $m=2$.

We are particularly interested in \Cref{conj.optimalbound} since it would imply for instance that the bound provided by \Cref{thm.Hilbiso} is optimal.

\appendix

\section{Euler characteristic of Hilbert squares} \label{sec:Euler}
The goal of this appendix is to give a detailed proof of \Cref{prop:euler_char}. We shall exploit a nontrivial Chern class calculation on (smooth) degeneracy loci following Pragacz \cite{Pragacz}.

Fix $m = 1$ throughout this section. Let $s \in \{3,4\}$, and consider, as ever, a general map $\varphi\colon \mathcal F \to \mathcal E$ between vector bundles $\mathcal F=\of_{\PP^s}^{\oplus n+1}$ and $\mathcal E=\of_{\PP^s}(1)^{\oplus n}$. The $k$-th degeneracy locus of $\varphi$ is the closed subscheme $D_k(\varphi) \subset \PP^s$ defined by the condition $\rank (\varphi) \leq k$, which is (locally) equivalent to the vanishing of the $(k+1)$-minors of $\varphi$. We are interested in the case $k=n-1$, which leads to $D_{n-2}(\varphi)$ of expected codimension $6$, and $D_{n-1}(\varphi)$ of expected codimension $2$. Since $\varphi$ is general, we have $D_{n-2}(\varphi)=\emptyset$, so that $D_{n-1}(\varphi) \subset \PP^s$ is a smooth subvariety of codimension $2$. In the case $s=4$, we shall denote it by $S_n \subset \mathbb P^4$, whereas in the case $s=3$ we shall denote it by $C_n \subset \mathbb P^3$.

We start assuming $s=4$, the case $s=3$ being essentially a truncation of the case $s=4$. Let $H \in A^1(\PP^4)$ denote the first Chern class of $\mathcal O_{\PP^4}(1)$. The ordinary Segre class of $\mathcal E$ is the class
\[
\widetilde s(\mathcal E) 
= \sum_{0\leq i\leq 4}\widetilde s_i(\mathcal E) = (1+H)^{-n},
\]
with $\widetilde s_i(\mathcal E) \in A^i(\PP^4) = \mathbb Z[H^i]$ sitting in codimension $i$.
Inverting the Chern class
\[
c(\mathcal E) = 1+nH+\binom{n}{2}H^2+\binom{n}{3}H^3+\binom{n}{4}H^4
\]
we find
\begin{align*}
    \widetilde s_1(\mathcal E) &= -c_1(\mathcal E)=-nH \\
    \widetilde s_2(\mathcal E) &= s_1(\mathcal E)^2-c_2(\mathcal E) =  \left[n^2-\binom{n}{2}\right] H^2\\
    \widetilde s_3(\mathcal E) &= -s_1(\mathcal E)c_2(\mathcal E)-s_2(\mathcal E)c_1(\mathcal E)-c_3(\mathcal E) =  \left[-n^3-\binom{n}{3}+2n\binom{n}{2} \right]H^3\\
    \widetilde s_4(\mathcal E) &= -s_1(\mathcal E)c_3(\mathcal E)-s_2(\mathcal E)c_2(\mathcal E)-s_3(\mathcal E)c_1(\mathcal E)-c_4(\mathcal E) \\ 
    &= \left[n^4+2n\binom{n}{3}-3n^2\binom{n}{2}+\binom{n}{2}^2-\binom{n}{4} \right] H^4.
\end{align*}
We set $s_i = (-1)^i \widetilde{s_i}(\mathcal E)$ for $0\leq i\leq 4$.
Then, unraveling \cite[Example 5.8~(ii)]{Pragacz}, we have, for the smooth surface $S_n \subset \mathbb P^4$, an identity 
\begin{equation}\label{eqn:pragacz}
    e_{\mathrm{top}}(S_n) =
    s_2c_2(\PP^4)-\left[s_{(2,1)}+2s_3\right]c_1(\PP^4)+s_{(2,1,1)}+3s_{(3,1)}+3s_4,
\end{equation}
given the Schur polynomials
\begin{align*}
    s_{(2,1)} &= 
    \begin{vmatrix}
    s_2 & s_3 \\
    s_0 & s_1
    \end{vmatrix}
    = 
    \begin{vmatrix}
    s_2 & s_3 \\
    1 & s_1
    \end{vmatrix} = s_2s_1-s_3
    \\
    s_{(3,1)} &=
    \begin{vmatrix}
    s_3 & s_4 \\
    s_0 & s_1
    \end{vmatrix}
    =
    \begin{vmatrix}
    s_3 & s_4 \\
    1 & s_1
    \end{vmatrix} = s_3s_1-s_4\\
    s_{(2,1,1)} &= 
    \begin{vmatrix}
    s_2 & s_3 & s_4 \\
    s_0 & s_1 & s_2 \\
    0 & s_0 & s_1
    \end{vmatrix}
    = 
    \begin{vmatrix}
    s_2 & s_3 & s_4 \\
    1 & s_1 & s_2 \\
    0 & 1 & s_1
    \end{vmatrix}=s_2(s_1^2-s_2)-(s_1s_3-s_4).
\end{align*}
Expanding, we obtain
\begin{align*}
    s_2c_2(\PP^4) &= 10n^2-10\binom{n}{2} \\
    \left[s_{(2,1)} + 2s_3\right] c_1(\PP^4) &= 5(s_2s_1+s_3)H = 10n^3 - 15n\binom{n}{2}+5\binom{n}{3} \\
    s_{(2,1,1)} 
    &= n\binom{n}{3}-\binom{n}{4} \\
    3s_{(3,1)} &= \binom{n}{2}\left[3n^2-3\binom{n}{2}\right]-3n\binom{n}{3}+3\binom{n}{4} \\
    3s_4 &= 3n^4+6n\binom{n}{3}-9n^2\binom{n}{2}+3\binom{n}{2}^2-3\binom{n}{4}.
\end{align*}
Formula \eqref{eqn:pragacz} then yields
\[
 e_{\mathrm{top}}(S_n)
 =n^2(10-10n+3n^2)+\binom{n}{2}(-10+15n-6n^2)+\binom{n}{3}(4n-5)-\binom{n}{4}.
\]
In the case of a smooth determinantal \emph{curve} $C_n \subset \PP^3$, i.e.~when we set $s=3$, we only need to use
\[
s_0=1,\quad s_1=nH,\quad s_2 = \left[n^2-\binom{n}{2}\right] H^2,\quad s_3= \left[n^3+\binom{n}{3}-2n\binom{n}{2}\right]H^3. 
\]
In this case, \cite[Example 5.8~(i)]{Pragacz} gives
\begin{align*}
e_{\mathrm{top}}(C_n) 
&= s_2c_1(\PP^3)-s_{(2,1)}-2s_3 
= 4Hs_2 - (s_2s_1-s_3) -2s_3 
= 4Hs_2 - s_2s_1 - s_3 \\
&= 4n^2-4\binom{n}{2}-n^3+n\binom{n}{2}-n^3-\binom{n}{3}+2n\binom{n}{2} \\
&= 4n^2-2n^3+(3n-4)\binom{n}{2}-\binom{n}{3}.
\end{align*}
The formulas for $e_{\mathrm{top}}(S_n)$ and $e_{\mathrm{top}}(C_n)$ prove \Cref{prop:euler_char}.

\section{Hodge--Deligne polynomial  of Hilbert squares}\label{sec:hodge-poly}
We again set $m=1$ throughout this section. We shall consider once more smooth (sub-determinantal) degeneracy loci $S = D_{n-1}(\varphi)\subset \mathbb P^s$ (of dimension 2 or 3), and we shall compute the Hodge--Deligne polynomial
\[
E(\Hilb^2(S);u,v) = \sum_{p,q\geq 0}h^{p,q}(\Hilb^2(S))(-u)^p(-v)^q \,\in\,\mathbb Z[u,v]
\]
via standard motivic techniques, exploiting the power structure on the Grothendieck ring of varieties $K_0(\Var_{\mathbb C})$ \cite{GLMHilb}, as well as our knowledge of the Hodge numbers of $S$ (cf.~\Cref{sec:ex}).

\subsection{Surface case: \texorpdfstring{$(s,n,m)=(4,4,1)$}{}}
Let us consider the smooth determinantal surface $S_4 = D_{3}(\varphi) \subset \PP^4$. By G\"{o}ttsche's formula \cite{Gott1} for the motive of the Hilbert scheme of points on a surface, combined with the main result of \cite{GLMHilb}, there is an identity
\[
\sum_{n\geq 0}\left[\Hilb^n (S_4)\right] q^n = \prod_{n>0}\left(1-\mathbb L^{n-1}q^n \right)^{-[S_4]}
\]
in $K_0(\Var_{\mathbb C})\llbracket q \rrbracket$, where exponentiation is to be thought of in the language of power structures. The Hodge--Deligne polynomial of a smooth projective $\mathbb C$-variety $Y$ is the polynomial
\[
E(Y;u,v) = \sum_{p,q \geq 0}h^{p,q}(Y) (-u)^p(-v)^q \,\in\,\mathbb Z[u,v].
\]
We have, on $\mathbb Z[u,v]$, the power structure defined by the identity
\[
\left(1-q\right)^{-f(u,v)} = \prod_{i,j}\left(1-u^iv^jq\right)^{-p_{ij}}
\]
if $f(u,v) = \sum_{i,j}p_{ij}u^iv^j$. Looking at the Hodge diamond depiced in \Cref{sec:45}, we deduce
\[
E(S_4;u,v) = 1+4u^2+45uv+4v^2+u^2v^2,
\]
and since $E(-)$ defines a morphism $K_0(\Var_{\mathbb C}) \to \mathbb Z[u,v]$ of \emph{rings with power structure} sending $\mathbb L\mapsto uv$, we have an identity
\begin{align*}
    \sum_{n\geq 0} E(\Hilb^n(S_4);u,v) q^n 
    &= \prod_{n>0} \left(1-u^{n-1}v^{n-1}q^n \right)^{-E(S_4;u,v)} \\
    &= \prod_{n>0} \left(1-q\right)^{-E(S_4;u,v)}\big|_{q \mapsto u^{n-1}v^{n-1}q^n} \\
    &= \prod_{n>0} \left(1-u^{n-1}v^{n-1}q^n\right)^{-1} \left(1-u^{n+1}v^{n-1}q^n \right)^{-4}\cdot \\ & \qquad \qquad \cdot\left(1-u^{n}v^{n}q^n \right)^{-45}\left(1-u^{n-1}v^{n+1}q^n \right)^{-4}\left(1-u^{n+1}v^{n+1}q^n \right)^{-1}
\end{align*}
where the substitution $q \mapsto u^{n-1}v^{n-1}q^n$ is possible thanks to the properties of a power structure.

Expanding and isolating the coefficient of $q^2$ gives 
\begin{multline*}
    E(\Hilb^2(S_4);u,v) 
    =1+46uv+4(u^2+v^2)+1097u^2v^2+184(uv^3+u^3v)+10(u^4+v^4)\\
    +46u^3v^3+4(u^4v^2+u^2v^4)
    +u^4v^4,
\end{multline*}
in full agreement with the Hodge diamond depicted in \Cref{sec:45}.

\subsection{Threefold case: \texorpdfstring{$(s,n,m)=(5,5,1)$}{}}\label{sec:hodge_3fold}
In the case $(s,n,m)=(5,5,1)$, we obtain a smooth threefold $S_{5,5,1} \subset \PP^5$ outside the `good range' of \Cref{thm:mainthm_intro}, cf.~\Cref{example.3foldm1}. There is an identity \cite{GLMHilb,ricolfi2019motive}
\[
\mathsf Z_{S_{5,5,1}}(q) = \sum_{n\geq 0}\,\left[\Hilb^n(S_{5,5,1})\right]q^n=\left(\sum_{n\geq 0}\,\bigl[\Hilb^n(\mathbb A^3)_0\bigr]q^n\right)^{[S_{5,5,1}]}
\]
in $K_0(\Var_{\mathbb C})\llbracket q \rrbracket$, where $\Hilb^n(\mathbb A^3)_0$ denotes the punctual Hilbert scheme, namely the subscheme of $\Hilb^n(\mathbb A^3)$ parametrising subschemes entirely supported at the origin $0 \in \mathbb A^3$. Let us define classes $\Omega_n \in K_0(\Var_{\mathbb C})$ via the relation
\[
\sum_{n\geq 0}\bigl[\Hilb^n(\mathbb A^3)_0\bigr]q^n = \Exp\left( \sum_{n>0}\Omega_nq^n\right)=\prod_{n>0}\,\left(1-q^n\right)^{-\Omega_n}.
\]
Since $\Hilb^1(\mathbb A^3)_0=\Spec\mathbb C$ and $\Hilb^2(\mathbb A^3)_0 = \PP^2$, one can easily compute $\Omega_1=1$ and $\Omega_2=\mathbb L+\mathbb L^2$. Therefore
\[
\mathsf Z_{S_{5,5,1}}(q) = \prod_{n>0}\, \left(1-q^n\right)^{-\Omega_n[S_{5,5,1}]},
\]
which implies
\begin{equation}\label{hodge_generating_series}
\sum_{n\geq 0}E(\Hilb^n(S_{5,5,1});u,v)q^n = \prod_{n>0}\, \left(1-q^n\right)^{-E(\Omega_n;u,v)E(S_{5,5,1};u,v)}.
\end{equation}
One can compute the Hodge--Deligne polynomial of $S_{5,5,1}$ to be
\[
E(S_{5,5,1};u,v) = 1+2uv+2u^2v^2+u^3v^3-(5u^3+151u^2v+151uv^2+5v^3),
\]
so that extracting the coefficient of $q^2$ from \eqref{hodge_generating_series}, one obtains
\[
E(\Hilb^2(S_{5,5,1});u,v)=
\left[\frac{(1-u^3q)^5(1-v^3q)^5(1-uv^2q)^{151}(1-u^2vq)^{151}}{(1-q)(1-uvq)^2(1-u^2v^2q)^2(1-u^3v^3q)}\right]_{q^2} + (uv+u^2v^2)E(S_{5,5,1};u,v).
\]
In particular, the topological Euler characteristic is 
\[
e_{\mathrm{top}}(\Hilb^2(S_{5,5,1})) = E(\Hilb^2(S_{5,5,1});1,1) = 46053 = e_{\mathrm{top}}(Z_{5,5,1})-105.
\]

\subsection{Threefold case: \texorpdfstring{$(s,n,m)=(5,6,1)$}{}}
In the case $(s,n,m)=(5,6,1)$, we get a smooth threefold $S_{5,6,1} \subset \PP^5$. Using the Hodge diamond depicted in \Cref{sec:651}, one has
\[
E(S_{5,6,1};u,v)=1+2uv+2u^2v^2+u^3v^3-(29u^3+520u^2v+520uv^2+29v^3).
\]
Formula \eqref{hodge_generating_series} applied to this case yields
\[
E(\Hilb^2(S_{5,6,1});u,v)=
\left[\frac{(1-u^3q)^{29}(1-v^3q)^{29}(1-uv^2q)^{520}(1-u^2vq)^{520}}{(1-q)(1-uvq)^2(1-u^2v^2q)^2(1-u^3v^3q)}\right]_{q^2} + (uv+u^2v^2)E(S_{5,6,1};u,v).
\]
In particular,
\[
e_{\mathrm{top}}(\Hilb^2(S_{5,6,1})) = E(\Hilb^2(S_{5,6,1});1,1) = 593502,
\]
in complete agreement with what one gets out of the Hodge diamond for $Z$ depicted in \Cref{sec:651}.

\bibliographystyle{amsplain-nodash}
\bibliography{bib}

\end{document}